\newcommand{\tc}{\textcolor{black}}
\newtheorem{remark}{Remark}
\def\e{{\epsilon}}
\def\e{{\varepsilon}}
\newcommand{\N}{\mathbb N}
\newcommand{\R}{\mathbb R}
\newcommand{\C}{\mathbb C}
\newcommand{\Q}{\mathbb Q}
\title{A matrix-algebraic algorithm for the Riemannian logarithm on the {S}tiefel manifold under the canonical metric}
\author{
  Ralf Zimmermann\thanks{Department of Mathematics and Computer Science, University of Southern Denmark (SDU) Odense,
    (zimmermann@imada.sdu.dk).}
}
\begin{document}
\maketitle

\begin{abstract}
  We derive a numerical algorithm for evaluating 
  the Riemannian logarithm on the Stiefel manifold
  with respect to the canonical metric.
  In contrast to the existing  optimization-based approach,
  we work from a purely matrix-algebraic perspective.
  Moreover, we prove that the algorithm converges locally and
  exhibits a linear rate of convergence.
\end{abstract}

\begin{keywords}
  Stiefel manifold, Riemannian logarithm, Riemannian exponential,  Dynkin series, Goldberg series, Baker-Campbell-Hausdorff series
\end{keywords}

\begin{AMS}
  15A16, 
  15B10, 
  15B57, 
  33B30, 
  33F05, 
  53-04, 
  65F60  
\end{AMS}

\section{Introduction}
%
Consider an arbitrary Riemannian manifold $\mathcal{M}$.
Geodesics on $\mathcal{M}$ are locally shortest curves
that are parametrized by the arc length.
Because they satisfy an initial value problem, they are uniquely determined
by specifying a starting point $p_0\in \mathcal{M}$ and a starting velocity
$\Delta \in  T_{p_0}\mathcal{M}$ from the tangent space at $p_0$.
Geodesics give rise to the {\em Riemannian exponential function}
that maps a tangent vector
$\Delta\in T_{p_0}\mathcal{M}$ to the endpoint $\mathcal{C}(1)$ of a geodesic path
$\mathcal{C}:[0,1] \rightarrow \mathcal{M}$ starting at $\mathcal{C}(0) = p_0\in \mathcal{M}$
with velocity $\Delta= \dot{\mathcal{C}}(0)\in T_{p_0}\mathcal{M}$.
It thus depends on the base point $p_0$ and is denoted by  
\begin{equation}
\label{eq:RiemannExp}
   Exp_{p_0}: T_{p_0}\mathcal{M} \rightarrow \mathcal{M}, Exp_{p_0}(\Delta) := \mathcal{C}(1).
\end{equation}
The Riemannian exponential is a local diffeomorphism, \cite[\S 5]{Lee1997riemannian}. 
%
%
%
%
This means that it is locally invertible and that its inverse,
called the {\em Riemannian logarithm} is also differentiable.
Moreover, \tc{the exponential is radially isometric, i.e.,} the Riemannian distance
between the starting point $p_0$ and the endpoint $p_1:= Exp_{p_0}(\Delta)$ on $\mathcal{M}$
is the same as the length of the velocity vector $\Delta$ of the geodesic
$t\mapsto Exp_{p_0}(t\Delta)$ when measured on the tangent space $T_{p_0}\mathcal{M}$, \tc{\cite[Lem. 5.10 \& Cor. 6.11]{Lee1997riemannian}}.
In this way, the exponential mapping gives a local parametrization
from the (flat, Euclidean) tangent space to the (possibly curved) manifold.
This is also referred to as to representing the manifold in {\em normal coordinates}
\cite[\S III.8]{KobayashiNomizu1963}.

The Riemannian exponential and logarithm are important both from the theoretical
perspective as well as in practical applications.
The latter fact holds true in particular,
when $\mathcal{M}$ is a {\em matrix manifold} \cite{AbsilMahonySepulchre2008}.
Examples range from data analysis and signal processing \cite{Gallivan_etal2003, Rahman_etal2005, BolzanoNowakRecht_grouse2010, Rentmeesters2013}
over computer vision \cite{BegelforWerman2006, Lui2012}
to adaptive model reduction and subspace interpolation \cite{BennerGugercinWillcox2015}
and, more generally speaking, optimization techniques on manifolds 
\cite{EdelmanAriasSmith1999,AbsilMahonySepulchre2004, AbsilMahonySepulchre2008}.
This list is far from being exhaustive.
\paragraph{Original contribution}
In the work at hand, we present a matrix-algebraic derivation 
of an algorithm for computing the Riemannian logarithm on the {\em Stiefel manifold}.
The matrix-algebraic perspective allows us to prove local linear convergence.
The approach is based on an iterative inversion of the
closed formula for the associated  Riemannian exponential that
has been derived in \cite[\S 2.4.2]{EdelmanAriasSmith1999}.
Our main tools are Dynkin's explicit Baker-Campbell-Hausdorff formula \cite{rossmann2006lie}
and {G}oldberg's exponential series \cite{Goldberg1956},
both of which represent a solution $Z$ to the matrix equation 
\[
  \exp_m(Z(X,Y)) = \exp_m(X) \exp_m(Y) \left(\Leftrightarrow 
  Z(\log_m(V),\log_M(W)) = \log_m(VW)\right),
\]
where $V=\exp_m(X), W = \exp_m(Y)$ and $\exp_m, \log_m$ are the standard matrix
exponential and matrix logarithm \cite[\S 10, \S 11]{Higham:2008:FM}.
\tc{As an aside}, we improve Thompson's norm bound from \cite{Thompson1989} on $\|Z(X,Y)\|$ for the Goldberg series
by a factor of $2$, where $\|\cdot\|$ is any submultiplicative matrix norm.

The Stiefel log algorithm can be implemented in $\mathcal{O}(10)$ lines of (commented) 
MATLAB \cite{MATLAB:2010} code, which we include in Appendix \ref{app:code}.

\paragraph{Comparison with previous work} To the best of our knowledge, up to now, the only algorithm for evaluating
the Stiefel logarithm appeared in Q. Rentmeesters' thesis \cite[Alg. 4, p.~91]{Rentmeesters2013}.
This algorithm is based on a Riemannian optimization problem.
It turns out that this approach and the ansatz \tc{that is pursued} here, 
though very different in their course of action,
lead to essentially the same numerical scheme.
Rentmeesters observes numerically a linear rate of convergence \cite[p.83, p.100]{Rentmeesters2013}.
Proving linear convergence for \cite[Alg. 4, p.~91]{Rentmeesters2013}
would require estimates on the Hessian, see \cite[\S 5.2.1]{Rentmeesters2013},
\cite[Thm. 4.5.6]{AbsilMahonySepulchre2008}.
In contrast, the derivation presented here uses only elementary matrix algebra
and the \tc{convergence proof given here} formally avoids the requirements of computing/estimating
step sizes, gradients and Hessians that are inherent to \tc{analyzing the convergence of} optimization approaches.
In fact, the convergence proof applies to \cite[Alg. 4, p.~91]{Rentmeesters2013}
and yields the linear convergence of this optimization approach
when using a fixed {\em unit step size}, \tc{but only on a sufficiently small domain.}
The thesis \cite{Rentmeesters2013} was published under a two-years access embargo
and the fundamentals of the work at hand were developed independently before \cite{Rentmeesters2013} was accessible.
\tc{
\paragraph{Transition to the complex case}
The basic geometric concepts of the Stiefel manifold, the  algorithm for the Riemannian log mapping developed here and its convergence proof carry over to complex matrices,
where orthogonal matrices have to be replaced with unitary matrices and skew-symmetric matrices with skew-Hermitian matrices and so forth,
see also \cite[\S 2.1]{EdelmanAriasSmith1999}. The thus adjusted log mapping algorithm was also confirmed numerically to work in the complex case.
}
%
\paragraph{Organization}
Background information on the Stiefel manifold are reviewed in Section \ref{sec:Stiefel_essentials}.
The new derivation for the Stiefel log algorithm is in Section \ref{sec:alg}, 
convergence analysis is performed in
Section \ref{sec:ConvProof},  experimental
results are in Section \ref{sec:experiments}, and the conclusions follow in
Section \ref{sec:conclusions}.
\paragraph{Notational specifics}
\label{sec:Notation}
The $(p\times p)$-{\em identity matrix} is denoted by $I_p\in\R^{p\times p}$.
If the dimension is clear, we will simply write $I$.
The $(p\times p)$-{\em orthogonal group}, i.e., the set of all
square orthogonal matrices is denoted by
\[
  O_{p\times p} = \{\Phi \in \R^{p\times p}| \Phi^T\Phi = \Phi\Phi^T = I_p\}.
\]
The standard matrix exponential and matrix logarithm are denoted by
\[
 \exp_m(X):=\sum_{j=0}^\infty{\frac{X^j}{j!}}, \quad \log_m(I+X):=\sum_{j=1}^\infty{(-1)^{j+1}\frac{X^j}{j}}.
\]
We use the symbols
$Exp^{St}, Log^{St}$ for the Riemannian counterparts on the Stiefel manifold.

When we employ the qr-decomposition of a rectangular matrix $A\in\R^{n\times p}$, 
we implicitly assume that $n\geq p$ and refer to
the `economy size' qr-decomposition $A=QR$, with $Q\in \R^{n\times p}$, $R\in \R^{p\times p}$.
%
%
%

%
\section{The Stiefel manifold in numerical representation}
\label{sec:Stiefel_essentials}
This section reviews the essential aspects of the numerical treatment of
Stiefel manifolds, where we rely heavily on the
excellent references \cite{AbsilMahonySepulchre2008, EdelmanAriasSmith1999}.
The {\em Stiefel manifold} is the compact homogeneous matrix manifold of all column-orthogonal
rectangular matrices
\[
  St(n,p):= \{U \in \R^{n\times p}| \quad U^TU = I_p\}.
\]
The {\em tangent space} $T_USt(n,p)$ at a point $U \in St(n,p)$
can be thought of as the space of velocity vectors of differentiable curves on $St(n,p)$
passing through $U$:
\[
  T_USt(n,p)=\{\dot{\mathcal{C}}(t_o)| \mathcal{C}:(t_0-\epsilon, t_0+\epsilon)\rightarrow St(n,p), \mathcal{C}(t_0)=U\}.
\]
For any matrix representative $U\in St(n,p)$,
the tangent space of $St(n, p)$ at $U$ is represented by
\[
  T_USt(n,p) = \left\{\Delta \in \R^{n\times p}|\quad U^T\Delta = -\Delta^TU\right\}\subset \R^{n\times p}.
\]
Every tangent vector $\Delta \in T_USt(n,p)$  may be written as
\begin{equation}
\label{eq:tang2}
  \Delta = UA + (I-UU^T)T, \quad A \in \R^{p\times p} \mbox{ skew}, \quad T\in\R^{n\times p} \mbox{ arbitrary}.
\end{equation}
%
%
The dimension of both $T_USt(n,p)$ and $St(n,p)$ is $np -\frac{1}{2}p(p+1)$.

Each tangent space carries an inner product 
$\langle \Delta, \tilde{\Delta}\rangle_U = tr\left(\Delta^T(I-\frac{1}{2}UU^T)\tilde{\Delta}\right)$
\tc{with corresponding norm $\| \Delta\|_U = \sqrt{\langle \Delta, \Delta\rangle_U}$.}
This is called the {\em canonical metric} on $T_USt(n,p)$.
It is derived from the 
quotient space representation $St(n,p) = O_{n\times n}/O_{(n-p) \times (n-p)}$
that identifies two square orthogonal matrices in $O_{n\times n}$
as the same point on $St(n,p)$, if their first $p$ columns coincide \cite[\S 2.4]{EdelmanAriasSmith1999}.
Endowing each tangent space with this metric (that varies differentiably in $U$)
turns $St(n,p)$ into a {\em Riemannian manifold}.

We now turn to the Riemannian exponential \eqref{eq:RiemannExp} but for $\mathcal{M} = St(n,p)$.
An efficient algorithm for evaluating the Stiefel exponential
was derived in \cite[\S 2.4.2]{EdelmanAriasSmith1999}.
%
%
%
\tc{The algorithm starts with decomposing} an input tangent vector
$\Delta\in T_USt(n,p)$ into its horizontal and vertical components
with respect to the base point $U$,
\[
   \Delta = UU^T\Delta + (I-UU^T)\Delta \stackrel{(\mbox{qr of } (I-UU^T)\Delta)}{=} UA + Q_ER_E.
\]
Because $\Delta$ is tangent, $A\in\R^{p\times p}$ is skew.
Then the matrix exponential is invoked
to compute 
\begin{equation}
 \label{eq:baby_log}
  \begin{pmatrix}M\\ N_E\end{pmatrix}
          := \exp_m\left(\begin{pmatrix}A & -R_E^T\\ R_E & 0\end{pmatrix}\right)\begin{pmatrix}I_p\\ 0\end{pmatrix}.
\end{equation}
\tc{The final output is\footnote{\tc{The index in $Q_E, R_E, N_E$ is used to emphasize
that these matrices stem from the Stiefel exponential as opposed to the
closely related matrices $Q,R,N$ that will appear in the procedure for the Stiefel logarithm.}}
\begin{equation}
 \label{alg:Stexp}
 \tilde{U} := Exp_{U}^{St}(\Delta) = UM + Q_EN_E \in St(n,p).
\end{equation}
(A MATLAB function for the Stiefel exponential is in the supplement in Appendix \ref{supp:Stexp}.)}
The matrix exponential \tc{in \eqref{eq:baby_log}} is related with the solution of the 
initial value problem that defines a geodesic on $St(n,p)$, see \cite[\S 2.4.2]{EdelmanAriasSmith1999} for details.
It turns out that the main obstacle in computing the inverse of the Stiefel exponential and
thus the Stiefel logarithm is inverting \eqref{eq:baby_log}, i.e.
finding $A,R_E$ given $M, N_E$, compare to \cite[eq. (5.21)]{Rentmeesters2013}.
\section{Derivation of the Stiefel log algorithm}
\label{sec:alg}
Let $U,\tilde{U}\in St(n,p)$ and assume that $\tilde{U}$ is contained in a neighborhood $\mathcal{D}$
of $U$ such that $Exp_U^{St}$ is a diffeomorphism from a neighborhood of $0\in T_USt(n,p)$ onto $\mathcal{D}$.
The central objective is to find $\Delta \in T_USt(n,p)$ such that
$Exp_U^{St}(\Delta) = \tilde{U}.$

Because of Alg. \ref{alg:Stexp}, we know that $\tilde{U}$ allows for a representation
$\tilde{U} = UM + Q_EN_E$.
Hence, we have to determine the unknown matrices $M,N_E\in \R^{p\times p}$, $Q_E\in \R^{n\times p}$,
which feature the following properties: $Q_E^TU=0$ and
$M^TM+N_E^TN_E = I_p$. 
(Note that by \eqref{eq:baby_log},  $M$ and $N_E$ are the left upper and lower $p\times p$ blocks
of a $2p\times 2p$ orthogonal matrix.)
We directly obtain
\[
  M = U^T\tilde{U}, \quad Q_EN_E = (I-UU^T)\tilde{U}.
\]
We compute candidates for $Q_E,N_E$ via
a qr-decomposition
\[
 QN \stackrel{qr}{=}  (I-UU^T)\tilde{U}, \quad Q\in St(n,p).
\]
%
%

\tc{The set of all orthogonal matrices with $M,N$ as an upper diagonal and lower off-diagonal block
is  
parametrized via
\[
    \left\{\begin{pmatrix}M & X\\ N & Y\end{pmatrix}| \quad
    \begin{pmatrix} X\\ Y\end{pmatrix}=\begin{pmatrix} X_0\\ Y_0\end{pmatrix}\Phi, \quad \Phi \in O_{p\times p}
    \right\},
\]
where $(X_0^T, Y_0^T)^T$ is a specific orthogonal completion, computed, say, via the Gram-Schmidt process.}

Thus, the objective is reduced to solving the following
nonlinear matrix equation
\begin{equation}
 \label{eq:fundamentalMatrixEq}
 0 = \begin{pmatrix}0 & I_p\end{pmatrix}
     \log_m\left(
              \begin{pmatrix}M & X_0\\ N & Y_0\end{pmatrix}
              \begin{pmatrix}I_p & 0\\ 0 & \Phi\end{pmatrix}
            \right)
     \begin{pmatrix}0 \\ I_p\end{pmatrix}, \quad \Phi \in O_{p\times p}.
\end{equation}
Writing 
$
\log_m\left(
              \begin{pmatrix}M & X_0\\ N & Y_0\end{pmatrix}
              \begin{pmatrix}I_p & 0\\ 0 & \Phi\end{pmatrix}
            \right)
= \begin{pmatrix} A & -B^T \\ B & C\end{pmatrix}
$,
this means finding a rotation $\Phi$ such that $C=0$.


The first result is that solving \eqref{eq:fundamentalMatrixEq} indeed
leads to the Riemannian logarithm on the Stiefel manifold.
\begin{theorem}
\label{thm:bigthm}
  Let $U,\tilde{U}\in St(n,p)$ and assume that $\tilde{U}$ is contained in a neighborhood $\mathcal{D}$
  of $U$ such that $Exp_U^{St}$ is a diffeomorphism from a neighborhood of $0\in T_USt(n,p)$ onto $\mathcal{D}$.

  Let $M$, $Q_E,N_E$, $Q,N$, $X_0, Y_0$ as introduced in the above setting.
  Suppose that $\Phi \in O_{p\times p}$ solves \eqref{eq:fundamentalMatrixEq}, i.e.,
  \[
   \log_m\left(\begin{pmatrix} M & X_0\Phi\\ N & Y_0\Phi\end{pmatrix}\right) 
    = \begin{pmatrix}A & -B^T \\ B & 0\end{pmatrix}.
  \]
  Define $\Delta := UA + QB \in T_{U}St(n,p)$. 
  Then 
  $Exp_{U}^{St}(\Delta) = \tilde{U}$, i.e.,
  $
    \Delta = Log_{U}^{St}(\tilde{U})$.
\end{theorem}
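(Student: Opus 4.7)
The plan is a direct algebraic verification: I would apply the Stiefel exponential \eqref{alg:Stexp} to the candidate $\Delta = UA + QB$ and show that the output equals $\tilde{U}$. The statement concerns only the one-step consistency between \eqref{eq:fundamentalMatrixEq} and \eqref{eq:baby_log}, so no convergence or iteration argument is needed here.

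First I would confirm that $\Delta \in T_U St(n,p)$. The matrix inside the logarithm in the hypothesis is the product of two orthogonal matrices, namely $\begin{pmatrix} M & X_0 \\ N & Y_0 \end{pmatrix}$ and $\mathrm{diag}(I_p,\Phi)$, so it is orthogonal; its matrix logarithm is therefore skew-symmetric, which forces $A^T = -A$. Combined with $Q^TU=0$ coming from the construction of $Q$ as a Stiefel factor of $(I-UU^T)\tilde{U}$, the identity $U^T\Delta = A$ shows $\Delta$ satisfies the tangency condition \eqref{eq:tang2}. Next I would identify the horizontal and vertical parts of $\Delta$ at $U$: $UU^T\Delta = UA$ and $(I-UU^T)\Delta = QB$. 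Since $B$ is generally not upper triangular, $(Q,B)$ is not an honest qr pair; however, the value produced by \eqref{alg:Stexp} is invariant under the replacement $(Q_E, R_E)\mapsto (Q_EV, V^TR_E)$ for any $V\in O_{p\times p}$. Indeed, conjugating the $2p\times 2p$ block matrix in \eqref{eq:baby_log} by $\mathrm{diag}(I_p, V^T)$ sends the output pair $(M, N_E)$ to $(M, V^TN_E)$, so the combination $UM + Q_EN_E = UM + (Q_EV)(V^TN_E)$ is unchanged. Choosing $V$ to be the orthogonal factor of a qr of $B$ therefore legitimizes plugging $(Q,B)$ directly into \eqref{eq:baby_log}.

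Given this identification, the hypothesis on $\Phi$ yields
\[
\exp_m\!\left(\begin{pmatrix} A & -B^T \\ B & 0 \end{pmatrix}\right) = \begin{pmatrix} M & X_0\Phi \\ N & Y_0\Phi \end{pmatrix},
\]
whose first $p$ columns are $(M^T, N^T)^T$. Hence \eqref{eq:baby_log} returns $M$ and $N$, and \eqref{alg:Stexp} evaluates to $Exp_U^{St}(\Delta) = UM + QN$. Since $M = U^T\tilde{U}$ and $QN = (I-UU^T)\tilde{U}$ by construction, this simplifies to $UU^T\tilde{U} + (I-UU^T)\tilde{U} = \tilde{U}$, and the diffeomorphism assumption then promotes $\Delta$ to $Log_U^{St}(\tilde{U})$. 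I expect the qr-invariance observation to be the only mildly non-trivial ingredient; everything else is direct substitution.
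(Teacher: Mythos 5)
Your proposal is correct and follows essentially the same route as the paper's proof: you apply $Exp_U^{St}$ to the candidate $\Delta = UA+QB$ and reconcile $(Q,B)$ with the honest qr pair $(Q_E,R_E)$ via an orthogonal rotation, which is precisely the paper's conjugation by $\Psi = Q_E^TQ$ phrased as an invariance statement. The remaining substitution and the appeal to local injectivity of the exponential match the paper's argument.
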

\begin{proof}
  By construction, it holds $QN = (I-UU^T)\tilde{U}$ and hence
  \begin{equation}
  \label{eq:Qrelations}
    U^TQ = 0, \quad (I-UU^T)Q=Q, \quad QQ^T\tilde{U} = QQ^T (I-UU^T)\tilde{U} = (I-UU^T)\tilde{U}.
  \end{equation}
  Now, we apply the Stiefel exponential Alg. \ref{alg:Stexp} to $\Delta=UA + QB$.
  This gives
  $
    (I-UU^T)\Delta 
      = QB
  $
  and 
  \[
    Q_ER_E \stackrel{qr}{=} QB \Leftrightarrow R_E = \Psi B,
   \mbox{ where }\Psi:= (Q_E^TQ)\in O_{p\times p}. \footnote{The matrices $Q_E$ and $Q$ differ by an orthogonal rotation but span the same subspace.}
  \]
  \tc{With $U^T\Delta = A$, we obtain}
  \begin{subequations}
   \begin{align}
   \nonumber
    \begin{pmatrix}M\\ N_E\end{pmatrix}
       &:= \exp_m\left(\begin{pmatrix}A & -R_E^T\\ R_E & 0\end{pmatrix}\right)
                       \begin{pmatrix}I_p\\ 0\end{pmatrix}\\
   \nonumber
       &= \begin{pmatrix} I & 0 \\ 0 & \Psi\end{pmatrix}
          \exp_m\left(  
            \begin{pmatrix}A & -B^T \\ B & 0\end{pmatrix}\right)
          \begin{pmatrix} I & 0 \\ 0 & \Psi^T\end{pmatrix}
            \begin{pmatrix}I_p\\ 0\end{pmatrix}\\
   \nonumber
       &= \begin{pmatrix} I & 0 \\ 0 & \Psi\end{pmatrix}
          \begin{pmatrix} M & X_0\Phi \\ N & Y_0\Phi \end{pmatrix}
          \begin{pmatrix} I_p\\ 0\end{pmatrix}
        = \begin{pmatrix} M \\ \Psi N\end{pmatrix}.
   \end{align}
  \end{subequations}
 Keeping in mind that $Q_E\Psi = Q_EQ_E^T Q = Q$,
 this leads to an output of
  \begin{equation}
\nonumber 
      Exp_U^{St}(\Delta) =  UM+Q_EN_E = UM + Q_E\Psi N = UM +  Q N = \tilde{U}.
  \end{equation}
 Thus, $\Delta$ is a valid tangent vector in $T_USt(n,p)$
 such that $Exp_U^{St}(\Delta) = \tilde{U}\in St(n,p)$.
 From abstract differential geometry, we know that $Log_U^{St}(\tilde{U}) \in T_USt(n,p)$
 is the unique tangent with $Exp_U^{St}(Log_U^{St}(\tilde{U})) = \tilde{U}$.
 We arrive at the claim
  \[
     \Delta = Log_U^{St}(\tilde{U}).
  \]
\end{proof}
Having established Theorem \ref{thm:bigthm}, we now focus on solving \eqref{eq:fundamentalMatrixEq}.
Let
\begin{subequations}
 \begin{align}
  \nonumber
  V_0 &      := \begin{pmatrix} M & X_0\\ N & Y_0\end{pmatrix}, \quad
  \log_m(V_0):= \begin{pmatrix}A_0 & -B_0^T \\ B_0 & C_0\end{pmatrix}, \\
  \nonumber
  W_0     &  := \begin{pmatrix} I_p & 0\\ 0 & \Phi_0\end{pmatrix},\quad 
  \log_m(W_0) = \begin{pmatrix} 0 & 0 \\ 0 & \log_m(\Phi_0)\end{pmatrix}.
 \end{align}
\end{subequations}
Up to terms of first order, it holds $\log_m(V_0W_0)=\log_m(V_0) + \log_m(W_0)$.
Hence, the choice
\[
  \Phi_0 = \exp_m(-C_0)
\]
gives an approximate solution to \eqref{eq:fundamentalMatrixEq}.
We define 
\begin{equation}
\label{eq:alg_log_iter_idea}
V_1 := \begin{pmatrix} M & X_0\\ N & Y_0\end{pmatrix}
                     \begin{pmatrix} I_p & 0\\ 0 & \Phi_0\end{pmatrix}, \quad  
   \log_m(V_1) := \begin{pmatrix}A_1 & -B_1^T \\ B_1 & C_1\end{pmatrix}
\end{equation}
and iterate. 
This 
\tc{is the essential idea of Alg. \ref{alg:Stlog} for the Riemannian logarithm}.\footnote{This is the same algorithm as 
\cite[Alg. 4, p.~91]{Rentmeesters2013}
that Rentmeesters obtains from his geometrical perspective
when a fixed unit step length is employed and when \cite[\S 5.3]{Rentmeesters2013}
is taken into account.}
In Section \ref{sec:ConvProof} we make use of the Baker-Campbell-Hausdorff
formula \cite[\S 1.3, p. 22]{rossmann2006lie} 
that corrects for the misfit
in the approximative matrix relation $\log_m(VW) \approx \log_m(V) + \log_m(W)$
for two non-commuting matrices $V,W$
in order to show that the above procedure leads to
\[\|C_{k+1}\|_2 < \alpha \|C_k\|_2\]
for all $k\in\N_0$ and a constant $\alpha <1$ and is thus convergent.

Since the Riemannian exponential is a local diffeomorphism, we have to postulate a suitable bound on the distance between 
the input matrices $U$ and $\tilde{U}$.
Suppose that $\|U-\tilde{U}\|_2<\e$. 
Recalling the definitions $M = U^T\tilde{U}$ and $(I-UU^T)\tilde{U} = QN$, this gives the following bounds for the horizontal and the vertical component of $U-\tilde{U}$
with respect to the subspace spanned by $U$:
\[
 \|UU^T(U-\tilde{U})\|_2 = \|I_p - M\|_2 < \e, \hspace{0.2cm} \|(I-UU^T)(U-\tilde{U})\|_2 = \|QN\|_2 = \|N\|_2 < \e.
\]

However, it turns out that for the convergence proof, estimates
on the norms of $X_0$, $Y_0$ and $Y_0-I_p$ are also required.
By the CS-decomposition of orthonormal matrices \cite[Thm 2.6.3, p. 78]{GolubVanLoan}, the diagonal blocks $M$ and $Y_0$
share the same singular values and so do the off-diagonal blocks $N,X_0$.
Hence, $\|N\|_2 = \|X_0\|_2 < \e$.
Let $D_1\Sigma R_1^T$ be the SVD of $M$ and $D_2\Sigma R_2^T$ be the SVD of $Y_0$.
An estimate for the singular values of $M$ can be obtained as follows:
\begin{equation}
\label{eq:NM_connection}
  \e^2 > \|N\|_2^2 = \|N^TN\|_2 = \|I-M^TM\|_2 = \|I-\Sigma^2\|_2 = \max_{\sigma_k} (1- \sigma_k^2),
\end{equation}
where we have used that $ \sigma_1 = \|M\|_2 \leq 1$. 
Now, we replace the $Y_0$ that has been obtained via, say, Gram-Schmidt by
$Y_0R_2D_2^T = D_2\Sigma D_2^T$ (and, correspondingly,  $X_0$ by $X_0R_2D_2^T$).
Essentially, this is the orthogonal Procrustes method, \cite[\S 12.4.1, p.601]{GolubVanLoan},
applied to $\min_{\Psi \in O_{p\times p}} \|I - Y_0\Psi\|_2$.
This operation preserves the orthogonality of $V_0 = \begin{pmatrix} M & X_0\\ N & Y_0\end{pmatrix}$,
but the new $Y_0$ is symmetric with eigenvalue  decomposition
$Y_0 = D_2\Sigma D_2^T$.
This gives 
\[
 \|Y_0-I_p\|_2 = \|\Sigma -I_p\|_2 = \max_{\sigma_k} |1-\sigma_k| < \max_{\sigma_k} (1- \sigma_k^2) < \e^2.
\]
In summary, if $\|U-\tilde{U}\|_2<\e$ and if we start the iterations indicated by \eqref{eq:alg_log_iter_idea} 
with the Procrustes orthogonal completion $X_0, Y_0$ rather than the standard Gram-Schmidt process,
we obtain Alg. \ref{alg:Stlog} with the starting conditions
\begin{equation}
 \label{eq:start_norms}
\|I_p - M\|_2 < \e, \quad \|N\|_2 = \|X_0\|_2 < \e, \quad \|Y_0-I_p\|_2 <\e^2.
\end{equation}
\begin{algorithm}
\caption{Stiefel logarithm, iterative procedure}
\label{alg:Stlog}
\begin{algorithmic}[1]
  \REQUIRE{base point $U\in St(n,p)$ and $\tilde{U}\in St(n,p)$ `close' to base point, $\tau>0$
  convergence threshold}
  \STATE{ $M:= U^T\tilde{U} \in \R^{p\times p}$}
  \STATE{ $QN := \tilde{U} - UM \in \R^{n \times p}$}
    \hfill \COMMENT{(thin) qr-decomp. of normal component of $\tilde{U}$}
  \STATE{ $V_0 := \begin{pmatrix}M&X_0\\N&Y_0\end{pmatrix} \in O_{2p\times 2p}$}
    \hfill \COMMENT{orthogonal completion \tc{and Procrustes}}

  \FOR{ $k=0,1,2,\ldots$}
  \STATE{$\begin{pmatrix}A_k & -B_k^T\\ B_k & C_k\end{pmatrix} := \log_m(V_k)$}
    \hfill \COMMENT{matrix log, $A_k, C_k$ skew}
  \IF{$\|C_k\|_2 \leq \tau$}
    \STATE{break}
  \ENDIF
  \STATE{ $\Phi_{k} = \exp_m{(-C_k)}$}
    \hfill \COMMENT{matrix exp, $\Phi_{k}$ orthogonal}
  \STATE{$V_{k+1} := V_{k}W_k$, where $W_k:=\begin{pmatrix}I_p& 0\\ 0 & \Phi_k\end{pmatrix}$
  }
    \hfill \COMMENT{update}
  \ENDFOR
  \ENSURE{$\Delta := Log_{U}^{St}(\tilde{U}) = U A_k + QB_k \in T_{U}St(n,p)$}
%
\end{algorithmic}
\end{algorithm}
\paragraph{Computational costs}
W.l.o.g. suppose that $n\geq p$. In fact the most important case
in practical applications is $n\gg p$.
Because of the matrix product in step 1 and the qr-decomposition in step 2
of Alg. \ref{alg:Stlog}, the preparatory steps 1--3 require $\mathcal{O}(np^2)$ FLOPS.
The dominating costs in the iterative loop, steps 5--10, are the evaluation
of the matrix logarithm for a $2p$-by-$2p$ orthogonal matrix and the matrix exponential
for a $p$-by-$p$ skew-symmetric matrix in every iteration, both of which can be achieved efficiently
via the Schur decomposition.
The costs are $\mathcal{O}(p^3)$, see \cite[Alg. 7.5.2]{GolubVanLoan}.

A MATLAB function for Alg. \ref{alg:Stlog} is in Appendix \ref{app:code}.
\section{Convergence proof}
\label{sec:ConvProof}
In this section, we establish the convergence of Alg. \ref{alg:Stlog}
under suitable conditions. We state the main result as Theorem \ref{thm:conv_thm}; 
the proof is subdivided into the auxiliary results Lemma \ref{lem:BCHlem}, 
and Lemma \ref{lem:preserve_norms} as well as Lemma \ref{lem:GoldbergSeries}
that appears in Appendix \ref{app:improvedGoldberg}.
An essential requirement is that the point $\tilde{U}\in St(n,p)$ 
that is to be mapped to the tangent space $T_{U}St(n,p)$ is 
sufficiently close to the base point $U\in St(n,p)$ in the sense that
\tc{$\|U - \tilde{U}\|_2 < \e$}.
Throughout, we will make extensive use of Dynkin's explicit BCH formula
\cite[\S 1.3, p. 22]{rossmann2006lie}.
\begin{theorem}
\label{thm:conv_thm}
  Let $U,\tilde{U} \in St(n,p)$. \tc{ Assume that $\|U-\tilde{U}\|_2 < \e$.}
  Let $(V_k)_k$ be the sequence of orthogonal matrices generated by Alg. \ref{alg:Stlog}.

  \tc{If $\e<0.0912$}, 
  then Alg. \ref{alg:Stlog} converges to a limit matrix $V_\infty:= \lim_{k\rightarrow \infty}{V_k}$ such that
  \begin{displaymath}
    \log_m(V_{\infty}):=\begin{pmatrix}A_{\infty} & -B_{\infty}^T \\ B_{\infty} & C_\infty\end{pmatrix}
    = \begin{pmatrix}A_{\infty} & -B_{\infty}^T \\ B_{\infty} & 0\end{pmatrix}.
  \end{displaymath}
  Given a numerical convergence threshold $\tau>0$, see Alg. \ref{alg:Stlog}, line 7,
  the algorithm requires at most
  $k = \lceil \frac{\log(\|C_0\|_2) -\log(\tau)}{\log(2)}\rceil -1$ iteration steps to meet
  the convergence criterion under the above conditions.
\end{theorem}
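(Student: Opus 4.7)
The plan is to establish a one-step contraction $\|C_{k+1}\|_2 \leq \alpha \|C_k\|_2$ with $\alpha \leq \tfrac{1}{2}$ valid uniformly in $k$; iterating yields $\|C_k\|_2 \leq 2^{-k}\|C_0\|_2$, which produces the stated iteration count and forces $C_k\to 0$, from which standard compactness and continuity arguments deliver a limit $V_\infty$ with $C_\infty=0$.

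First I would unfold a single iteration using Dynkin's explicit BCH formula. Writing $L_k := \log_m(V_k)$, whose four $p\times p$ blocks are $A_k,-B_k^T,B_k,C_k$, and $S_k := \log_m(W_k)$, which has only the block $-C_k$ in its lower-right corner and zeros elsewhere, the BCH series gives
\[
\log_m(V_{k+1}) \;=\; L_k + S_k + \sum_{n\geq 2} Z_n(L_k,S_k),
\]
where each $Z_n$ is a sum of iterated commutators. In the $(2,2)$-block the linear contributions $C_k$ and $-C_k$ cancel exactly, so $C_{k+1}$ is made up entirely of commutator expressions, each containing at least one factor of $S_k$. Taking submultiplicative norms then produces a majorant of the form $\|C_{k+1}\|_2 \leq h\bigl(\|L_k\|_2\bigr)\cdot \|C_k\|_2$ with $h$ vanishing at the origin. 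The quantitative control of the tail $\sum_{n\geq 2}$ is where the improved Goldberg-series bound (Lemma \ref{lem:GoldbergSeries}) is essential: it replaces the merely qualitative convergence of BCH by a concrete explicit majorant, and it is at this point that the numerical threshold $\e<0.0912$ is forced, combined with the starting estimates \eqref{eq:start_norms} in order to achieve $h\bigl(\|L_0\|_2\bigr)\leq\tfrac{1}{2}$.

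To propagate the bound beyond $k=0$ I would invoke Lemma \ref{lem:preserve_norms}, which I expect to state that $\|L_k\|_2$ (equivalently the joint norms of $A_k$ and $B_k$) does not grow as $k$ advances. This is the main technical obstacle: without such a preservation result, the function $h$ could grow and destroy the geometric decay after a single step, whereas with it the same contraction factor can be recycled at every iteration. Once preservation is in hand, summing $\|W_k - I_{2p}\|_2 \leq \|C_k\|_2\, e^{\|C_k\|_2} \leq c\cdot 2^{-k}$ shows that the partial products $V_0 W_0\cdots W_k$ form a Cauchy sequence in the compact group $O_{2p\times 2p}$, hence $V_k\to V_\infty$. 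Continuity of $\log_m$ at $V_\infty$, which stays in the domain of the principal matrix logarithm because $V_0$ is close to $I_{2p}$ by \eqref{eq:start_norms} and the perturbations $W_k-I_{2p}$ decay geometrically, finally yields $C_\infty=0$; the iteration-count estimate follows by solving $2^{-k}\|C_0\|_2 \leq \tau$ for $k$, with the off-by-one absorbed in the ceiling.
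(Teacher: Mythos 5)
Your proposal is correct and follows essentially the same route as the paper: the one-step contraction via Dynkin's BCH formula with the Goldberg tail bound is exactly Lemma \ref{lem:BCHlem}, the propagation of the norm condition $\|\log_m(V_k)\|_2 < \frac{1}{2}(\sqrt{5}-1)$ across iterations is exactly Lemma \ref{lem:preserve_norms}, and the paper's proof of the theorem is just the assembly of these two results into the geometric decay $\|C_k\|_2 < 2^{-k}\|C_0\|_2$, from which $W_k \to I$ and the iteration count follow. Your explicit Cauchy-sequence argument for the partial products merely fleshes out what the paper leaves as ``an immediate consequence of Alg.~\ref{alg:Stlog}, step 10.''
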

\begin{remark}
{\em 
 Alg. \ref{alg:Stlog} generates a sequence of orthonormal matrices
  \begin{equation}
  \label{eq:rem1}
     V_{k+1} = V_kW_k = V_0(W_0W_1\dots W_k)
     = V_0 \left(\begin{pmatrix}I_p& 0\\ 0 & \Phi_0\end{pmatrix}\dots 
                 \begin{pmatrix}I_p& 0\\ 0 & \Phi_k\end{pmatrix}\right)
      \in O_{2p\times 2p}.
  \end{equation}
  The proof of Theorem \ref{thm:conv_thm} will show that
  $\lim_{k\rightarrow\infty}W_k=I_{2p}$, see \eqref{eq:phi_limit}.
  Therefore, the sequence of orthogonal products $\Phi_0\dots\Phi_k$
  converges to a limit $\Phi_\infty$ for $k\rightarrow\infty$.
  The limit $\Phi_\infty$ solves \eqref{eq:fundamentalMatrixEq}.
  However, it is not required to actually form $\Phi_\infty$.
}
\end{remark}
In pursuit of the proof of Theorem \ref{thm:conv_thm}, we first show that if the norm of the matrix logarithm of the 
orthogonal matrix $V_k$ produced by Alg. \ref{alg:Stlog} at iteration $k$
is sufficiently small, then the norm of the lower $p$-by-$p$ diagonal block
of the matrix logarithm of the next iterate $V_{k+1}$ is strictly decreasing \tc{by a constant factor}.
\begin{lemma}
\label{lem:BCHlem}
  Let $U,\tilde{U} \in St(n,p)$.
  Let $(V_k)_k\subset O_{2p\times 2p}$ be the sequence of orthogonal matrices generated by Alg. \ref{alg:Stlog}.
  Suppose that at stage $k$, it holds
  \begin{equation}
   \label{eq:norm_cond1}
    \|\log_m(V_k)\|_2 := 
      \|\begin{pmatrix}A_k & -B_k^T \\ B_k & C_k\end{pmatrix}\|_2 < \frac{1}{2}(\sqrt{5}-1).
  \end{equation}
  Then $\log_m(V_{k+1}):=\begin{pmatrix}A_{k+1} & -B_{k+1}^T \\ B_{k+1} & C_{k+1}\end{pmatrix}$ features
  a lower $(p\times p)$-diagonal block of norm
  \begin{displaymath}
    \|C_{k+1}\|_2 < \alpha \|C_k\|_2, \quad 0<\alpha< \frac{1}{2}.
  \end{displaymath}
\end{lemma}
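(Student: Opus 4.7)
The plan is to apply the Baker--Campbell--Hausdorff (BCH) formula to $V_{k+1} = V_k W_k = \exp_m(X)\exp_m(Y)$, where $X := \log_m(V_k)$ and $Y := \log_m(W_k)$, and to show that the leading-order contributions to the lower-right $(p \times p)$ block cancel identically. The remainder of the BCH series then supplies the contraction.

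First I would record the special structure of $Y$. Since $\Phi_k = \exp_m(-C_k)$ (line 9 of Alg.\ \ref{alg:Stlog}) and $W_k = \begin{pmatrix} I_p & 0 \\ 0 & \Phi_k \end{pmatrix}$,
\[
 Y = \log_m(W_k) = \begin{pmatrix}0 & 0 \\ 0 & -C_k\end{pmatrix}, \qquad \|Y\|_2 = \|C_k\|_2 \leq \|X\|_2.
\]
Under the hypothesis $\|X\|_2 < \tfrac{1}{2}(\sqrt{5}-1)$, the pair $(X,Y)$ lies within the convergence domain of Dynkin's BCH series (this is precisely where Lemma \ref{lem:GoldbergSeries} is invoked), so
\[
 \log_m(V_{k+1}) = X + Y + R(X, Y),
\]
where $R(X, Y)$ is a convergent sum of nested commutators, each of which contains at least one occurrence of $X$ and at least one of $Y$.

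The second, decisive step is to inspect the lower-right $(p \times p)$ block. By construction, the $(2,2)$-block of $X + Y$ equals $C_k + (-C_k) = 0$, and a short block multiplication gives
\[
 [X, Y] = \begin{pmatrix}0 & B_k^T C_k \\ C_k B_k & 0\end{pmatrix},
\]
so the quadratic BCH correction $\tfrac{1}{2}[X, Y]$ also has a vanishing $(2,2)$-block. Consequently $C_{k+1}$ equals the $(2,2)$-block of the cubic-and-higher part of $R(X, Y)$, every term of which still carries at least one factor of $Y$. The third step is the quantitative bound: factoring one copy of $Y$ out of every commutator and invoking Lemma \ref{lem:GoldbergSeries} to majorize the tail should produce an estimate of the shape
\[
 \|C_{k+1}\|_2 \leq \|Y\|_2 \cdot \Psi(\|X\|_2)
\]
for an explicit increasing function $\Psi$ with $\Psi(0) = 0$.

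The main obstacle is verifying that the threshold $\tfrac{1}{2}(\sqrt{5}-1)$ is exactly the value at which $\Psi$ crosses $\tfrac{1}{2}$. The golden-ratio form of the constant strongly suggests that, after the series is collected and bounded, the condition $\Psi(\|X\|_2) < \tfrac{1}{2}$ reduces to a quadratic of the form $\beta^2 + \beta < 1$, whose positive root is precisely $\tfrac{1}{2}(\sqrt{5}-1)$. Once this arithmetic is pinned down, setting $\alpha := \Psi(\|X\|_2) < \tfrac{1}{2}$ yields the claimed contraction $\|C_{k+1}\|_2 < \alpha \|C_k\|_2$.
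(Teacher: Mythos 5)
Your plan matches the paper's proof step for step: Dynkin's BCH expansion of $\log_m(V_kW_k)$, cancellation of the $(2,2)$-block through first and second order, factoring one copy of $\|C_k\|_2=\|L_{W_k}\|_2$ out of every remaining word, and majorizing the tail via Lemma \ref{lem:GoldbergSeries}; even your guess that the constant $\tfrac{1}{2}(\sqrt{5}-1)$ comes from the quadratic $s^2+s<1$ (equivalently $\tfrac{s^2}{1-s}<1$) is exactly how the paper obtains it. The only piece you defer and the paper supplies is the explicit third- and fourth-order block computation yielding $\alpha=\tfrac{1}{6}s^2+\tfrac{1}{12}s^3+\sum_{l\geq 4}s^{l}$, whose value at the threshold ($\approx 0.4653$) is what certifies $\alpha<\tfrac{1}{2}$ rather than merely $\alpha<1$.
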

%
%
\begin{proof}
  Given $V_k = \begin{pmatrix}M & X_k \\N & Y_k\end{pmatrix} = 
      \exp_m\left(\begin{pmatrix}A_k & -B_k^T \\ B_k & C_k\end{pmatrix}\right)$,
  Alg. \ref{alg:Stlog} computes the next iterate $V_{k+1}$ via
  \[
    V_{k+1} := V_{k}W_k,
  \]
  where $W_k := \begin{pmatrix}I_p& 0\\ 0 & \exp_m(-C_k)\end{pmatrix}$.
  For brevity, we introduce the notation $L_V:=\log_m(V)$ for the matrix logarithm.
  Recall that $[V,W]:= VW-WV$ denotes the commutator or Lie-bracket of the matrices $V,W$. 
  From Dynkin's formula for the Baker-Campbell-Hausdorff series, see
  \cite[\S 1.3, p. 22]{rossmann2006lie}, we obtain
  \begin{subequations}
    \begin{align}
    \nonumber
      L_{V_{k+1}} &= \log_m(V_{k}W_k)\\
    \nonumber
      & =L_{V_k} + L_{W_k}+ \frac{1}{2}[L_{V_k}, L_{W_k}] \\
    \nonumber
      &\quad+\frac{1}{12}
      \left(
        \bigl[L_{V_k}, [L_{V_k},L_{W_k}]\bigr]
       + \bigl[L_{W_k}, [L_{W_k}, L_{V_k}]\bigr]
      \right)\\
    \nonumber
      & \quad-\frac{1}{24} \biggl[L_{W_k}, \bigl[L_{V_k}, [L_{V_k},L_{W_k}]\bigr] \ \biggr]
       + \sum_{l=5}^{\infty}{z_l(L_{V_k}, L_{W_k})},
    \end{align}
  \end{subequations}
  where $\sum_{l=5}^{\infty}{z_l(L_{V_k}, L_{W_k})} =: h.o.t.(5)$ are 
  the terms of fifth order and higher in the series.
  In the case at hand, it holds
   \begin{subequations}
    \begin{align}
    \nonumber
    L_{V_k} + L_{W_k}& =  \begin{pmatrix}A_k & -B_k^T \\ B_k & C_k\end{pmatrix}
              + \begin{pmatrix} 0 & 0\\ 0 & -C_k\end{pmatrix}
              = \begin{pmatrix}A_k & -B_k^T \\ B_k & 0\end{pmatrix},\\
    \nonumber 
      [L_{V_k}, L_{W_k}]
     & = \begin{pmatrix} 0  & B_k^TC_k \\ C_kB_k & 0\end{pmatrix}.
    \nonumber
    \end{align}
  \end{subequations}
  (Note that the basic idea in designing Alg. \ref{alg:Stlog} was exactly to choose $W_k$ such that the lower
  diagonal block in the BCH-series cancels in the first order terms.)
  
  The third and fourth order terms are
  \begin{subequations}
  \begin{align}
    \nonumber
   &\frac{1}{12} \begin{pmatrix} -2B_k^TC_kB_k  & 
      A_kB_k^TC_k - 2B_k^TC_k^2 \\ 2C_k^2B_k - C_kB_kA_k & B_kB_k^TC_k + C_kB_kB_k^T\end{pmatrix}, \mbox{ and}\\
    \nonumber
   &\frac{1}{24}
    \begin{pmatrix} 0  &  -B_k^TC_k^3 + A_kB_k^TC_k^2 \\ -C_k^3B_k + C_k^2B_kA_k 
                       & B_kB_k^TC_k^2 - C_k^2B_kB_k^T\end{pmatrix}, \mbox{ respectively.}
    \end{align}
  \end{subequations}
  Therefore, the series expansion for the lower diagonal block in $\log_m(V_{k+1})$ 
  starts with the terms of third order:
  \begin{subequations}
  \begin{align}
    \nonumber
      \|C_{k+1}\|_2 &= \|\frac{1}{12}(B_kB_k^TC_k + C_kB_kB_k^T ) - 
                        \frac{1}{24}(B_kB_k^TC_k^2 - C_k^2B_kB_k^T) + h.o.t.(5) \|_2\\
    \label{eq:C_k_est1}
      &\leq \left(\frac{1}{6}\|B_k\|_2^2 + \frac{1}{12}\|B_k\|^2_2\|C_k\|_2
        + \frac{\|h.o.t.(5)\|_2}{\|C_k\|_2}\right)\|C_k\|_2.
    \end{align}
  \end{subequations}
  We tackle the higher order terms via Lemma \ref{lem:GoldbergSeries} from the appendix.
  The lemma applies because $\|C_{k}\|_2 = \|L_{W_k}\|_2 \leq \|L_{V_k}\|_2< \frac{1}{2}(\sqrt{5}-1) < 1$.
  In  this setting, it gives
   \[
     \|h.o.t.(5) \|_2 \leq \sum_{l=5}^{\infty}{\|z_l(L_{V_k}, L_{W_k})\|_2} 
                     < \sum_{l=5}^{\infty}{\|L_{V_k}\|^{l-1}\|L_{W_k}\|_2},
   \]
  since each of the ``letters'' $L_{V_k}, L_{W_k}$ appears at least once
  in every ``word'' that contributes to $z_k(L_{V_k}, L_{W_k})$, see Appendix \ref{app:improvedGoldberg}
  and \cite{Thompson1989, Thompson1989b, VanBruntVisser2015}.

  Writing $s:= \|L_{V_k}\|_2$ and substituting in \eqref{eq:C_k_est1} leads to
  \begin{equation}
    \label{eq:C_k_est2}
     \|C_{k+1}\|_2 < \left(\frac{1}{6}s^2 + \frac{1}{12}s^3 
      +\sum_{l=4}^{\infty}{s^{l}}\right)\|C_k\|_2 =: \alpha \|C_k\|_2.
  \end{equation}
 The proof is complete, if we can show that $\alpha<1$.
 Note that $\sum_{l=4}^{\infty}{s^{l}} = \frac{1}{1-s} -1-s-s^2-s^3$.
 As a consequence
 \[
   \alpha < 1 \Leftrightarrow \frac{s^2}{1-s} - \frac{5}{6}s^2 - \frac{11}{12}s^3 <1.
 \]
 An obvious bound on the size of $s$ is obtained via observing that
 $\frac{s^2}{1-s} < 1$, if $s<\frac{1}{2}(\sqrt{5}-1)\approx 0.618$.
 The corresponding $\alpha$ is $0.4653<\frac{1}{2}$.
 A sharper bound can be obtained via solving the associated quartic equation.
 This shows that the inequality even holds for all $s<0.7111$.
\end{proof}
In order to make use of Lemma \ref{lem:BCHlem},
we establish conditions such that
$\|\log_m(V_k)\|_2  < \frac{1}{2}(\sqrt{5}-1)$ holds
throughout the iterations of Alg. \ref{alg:Stlog}.
%
%
%

%
%

\tc{This is the goal of the the next lemma. It relies on the auxiliary results 
Proposition \ref{prop:log_exp_bound}, Proposition \ref{prop:logm_bound} and Lemma \ref{lem:norm_startV0} from Appendix \ref{app:logm_bound}.
Proposition \ref{prop:log_exp_bound} shows that $\|\exp_m(C) -I\|_2 <\|C\|_2$ for $C$ skew-symmetric;
Proposition \ref{prop:logm_bound} establishes a bound in the opposite direction: if $V$ is orthogonal such that
$\|V-I\|_2 < r$, then $\|\log_m(V)\|_2 < r\sqrt{1-\frac{r^2}{4}}/(1-\frac{r^2}{2})$.
Finally, Lemma \ref{lem:norm_startV0} shows that $\|V_0 - I\|_2 < 2\e$ for the first iterate $V_0$ of Alg. \ref{alg:Stlog},
provided that $\|U-\tilde{U}\|_2 < \e$.
}
%
%
%
\begin{lemma}
\label{lem:preserve_norms}
  Let $U,\tilde{U} \in St(n,p)$ with $\|U-\tilde{U}\|_2 < \e$.
  Let $(V_k)_k\subset O_{2p\times 2p}$ be the sequence of orthogonal matrices generated by Alg. \ref{alg:Stlog},
  where 
  $V_k = \begin{pmatrix}M & X_k \\N & Y_k\end{pmatrix}$.
  Let $\tilde{\e} = 2\e \frac{\sqrt{1-\e^2}}{1-2\e^2}$ and $\hat{\e} := (e^{2\tilde{\e}} - 1) + \e +\e^2$.
  If $0<\e$ is small enough such that $\hat{\e} \frac{\sqrt{1-\frac{\hat{\e}^2}{4}}}{1-\frac{\hat{\e}^2}{2}} < \frac{1}{2}(\sqrt{5}-1)$,
  then
  \begin{displaymath}
    \|\log_m(V_k)\|_2 = 
      \|\begin{pmatrix}A_k & -B_k^T \\ B_k & C_k\end{pmatrix}\|_2 
      < \frac{1}{2}(\sqrt{5}-1) \mbox{ for all } k.
  \end{displaymath}
\end{lemma}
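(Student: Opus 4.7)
The plan is a straightforward induction on $k$: I will show that $\|V_k - I\|_2 < \hat\e$ at every stage, and then invoke Proposition \ref{prop:logm_bound} with $r = \hat\e$ together with the stated hypothesis $\hat\e \sqrt{1-\hat\e^2/4}/(1-\hat\e^2/2) < \frac{1}{2}(\sqrt 5 - 1)$ to obtain the claimed bound on $\|\log_m(V_k)\|_2$. For the base case $k=0$, Lemma \ref{lem:norm_startV0} yields $\|V_0 - I\|_2 < 2\e \leq \hat\e$, and Proposition \ref{prop:logm_bound} even delivers the sharper inequality $\|\log_m(V_0)\|_2 < \tilde\e$, so in particular $\|C_0\|_2 \leq \|\log_m(V_0)\|_2 < \tilde\e$.

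The crucial observation for the inductive step is that the update $V_{k+1} = V_k W_k$ with $W_k = \mathrm{diag}(I_p, \Phi_k)$ right-multiplies only the last $p$ columns. Writing $V_k = \begin{pmatrix} M & X_k \\ N & Y_k \end{pmatrix}$, the two left blocks $M$ and $N$ remain fixed throughout the iteration, while $X_k = X_0 \Phi_0 \cdots \Phi_{k-1}$ and $Y_k = Y_0 \Phi_0 \cdots \Phi_{k-1}$. By orthogonality of each $\Phi_j$, $\|X_k\|_2 = \|X_0\|_2 < \e$ for all $k$, and the starting norms $\|M - I\|_2 < \e$, $\|N\|_2 < \e$, $\|Y_0 - I_p\|_2 < \e^2$ from \eqref{eq:start_norms} remain available at every step.

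Assume by induction that $\|\log_m(V_j)\|_2 < \frac{1}{2}(\sqrt 5 - 1)$ for $j = 0, \ldots, k$. Then Lemma \ref{lem:BCHlem} applies at each prior stage, yielding $\|C_{j+1}\|_2 < \alpha \|C_j\|_2$ with $\alpha < 1/2$, so that $\sum_{j=0}^{k} \|C_j\|_2 < \|C_0\|_2 / (1-\alpha) < 2 \tilde\e$. Proposition \ref{prop:log_exp_bound} applied to the skew-symmetric $-C_j$ gives $\|\Phi_j - I_p\|_2 < \|C_j\|_2$, and telescoping produces
\[
  \|\Phi_0 \cdots \Phi_k - I_p\|_2 \leq \prod_{j=0}^{k}(1 + \|C_j\|_2) - 1 \leq \exp\left( \sum_{j=0}^{k} \|C_j\|_2 \right) - 1 < e^{2\tilde\e} - 1.
\]
Writing $Y_{k+1} - I_p = (Y_0 - I_p)\,\Phi_0 \cdots \Phi_k + (\Phi_0 \cdots \Phi_k - I_p)$ then gives $\|Y_{k+1} - I_p\|_2 < \e^2 + e^{2\tilde\e} - 1$. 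Splitting $V_{k+1} - I$ into its block-diagonal and block-antidiagonal parts and using that the operator norm of each equals the maximum of the two block norms, we conclude
\[
  \|V_{k+1} - I\|_2 \leq \max(\|M - I\|_2, \|Y_{k+1} - I_p\|_2) + \max(\|X_{k+1}\|_2, \|N\|_2) < \hat\e,
\]
and a final application of Proposition \ref{prop:logm_bound} closes the induction.

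The main technical obstacle is the recognition that $M$ and $N$ are invariant under the iteration, so that the only source of drift in $V_k$ is the accumulated rotation $\Phi_0 \cdots \Phi_k$ in the right-hand blocks; once this is identified, the BCH decay from Lemma \ref{lem:BCHlem} and the elementary inequality $\prod(1+a_j) \leq \exp(\sum a_j)$ suffice to close the loop $\|V_k - I\|_2 \leq \hat\e \Rightarrow \|\log_m(V_k)\|_2 < \frac{1}{2}(\sqrt 5 - 1) \Rightarrow$ applicability of Lemma \ref{lem:BCHlem} at the next step.
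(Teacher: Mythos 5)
Your proposal is correct and follows essentially the same route as the paper's proof: induction anchored by Lemma \ref{lem:norm_startV0}, geometric decay of $\|C_j\|_2$ from Lemma \ref{lem:BCHlem}, the bound $\|\Phi_j - I\|_2 \leq \|C_j\|_2$ from Proposition \ref{prop:log_exp_bound}, the product--exponential estimate $\prod(1+\|C_j\|_2)-1 \leq e^{2\tilde\e}-1$ for the accumulated rotation, a block splitting of $V_k - I$, and Proposition \ref{prop:logm_bound} to close the loop. The only differences are cosmetic (you regroup $Y_k - I$ and the block splitting slightly differently, and sum $\sum\|C_j\|_2$ via $1/(1-\alpha)<2$ rather than via $\tilde\e/2^j$), with identical resulting bounds.
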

%
%
\begin{proof}
Let $\delta_0 := \frac{1}{2}(\sqrt{5}-1)$.
By Lemma \ref{lem:norm_startV0} from Appendix \ref{app:logm_bound}, it holds 
\[\|\log_m(V_0)\|_2 < 2\e \frac{\sqrt{1-\e^2}}{1-2\e^2} = \tilde{\e} \quad (< \delta_0).\]
%
In particular, $\tilde{\e} > \|\begin{pmatrix}A_0 & -B_0^T \\ B_0 & C_0\end{pmatrix}\|_2 \geq \|C_0\|_2.$
By Alg. \ref{alg:Stlog}, $\Phi_0 = \exp_m(-C_0)$, where $\Phi_0$ is orthogonal. 
By Proposition \ref{prop:log_exp_bound} from  Appendix \ref{app:logm_bound}
\[
 \| \Phi_0 - I\|_2 \leq  \|C_0\|_2 < \tilde{\e}.
\]
Writing $V_1 = I+(V_1 - I)=:I+ E_1$, this leads to the estimate
\begin{subequations}
  \begin{align}
  \nonumber
    \|E_1\|_2 &= \|\begin{pmatrix}M-I & X_0\Phi_0 \\ N & Y_0\Phi_0-I\end{pmatrix}\|_2\\
   \nonumber
     &= \|\begin{pmatrix}M-I & 0 \\ 0 & Y_0(\Phi_0-I)\end{pmatrix} + \begin{pmatrix}0 & 0 \\ 0 & Y_0-I\end{pmatrix} +\begin{pmatrix}0 & X_0\Phi_0 \\ N & 0\end{pmatrix} \|_2\\ 
   \nonumber
    &\leq \max\{\|M-I\|_2, \|Y_0(\Phi_0-I)\|_2\} + \|Y_0-I\|_2 + \max\{\|N\|_2, \|X_0\Phi_0\|_2\}\\
  \nonumber
    &\leq \max\{\e, \|Y_0\|_2\|(\Phi_0-I)\|_2\} + \e^2 + \e\leq \tilde{\e} + \e^2 + \e\\
  \nonumber
    &\leq (e^{2\tilde{\e}} - 1) + \e +\e^2 = \hat{\e},
  \end{align}
\end{subequations}
where we have used \eqref{eq:start_norms} and the fact that $\|Y_0\|_2\leq 1$, see \eqref{eq:num_rad1}, \eqref{eq:num_rad2}. 
By Lemma \ref{lem:norm_startV0}, $\|\log_m(V_1)\|_2 < \hat{\e} \sqrt{1-\frac{\hat{\e}^2}{4}}/(1-\frac{\hat{\e}^2}{2})< \delta_0$.
Thus, the claim holds for $k=0,1$.

Lemma \ref{lem:BCHlem} applies to $\|\log_m(V_0)\|_2$ and leads to $\|C_1\|_2 < \frac{1}{2}  \|C_0\|_2 < \frac{\tilde{\e}}{2}$
for the lower diagonal block $C_1$ of the next iterate $\log_m(V_1)$.
Therefore, by using Proposition \ref{prop:log_exp_bound} once more, we see that
\[
  \|\Phi_1-I\|_2 \leq \|C_1\|_2 < \frac{\tilde{\e}}{2}.
\]
By induction, we obtain $V_k = I+(V_k - I)=:I+ E_k$ with
\begin{subequations}
  \begin{align}
  \nonumber
    \|E_k\|_2 &= \|\begin{pmatrix}M & X_0\hat{\Phi}_{k-1} \\ N & Y_0\hat{\Phi}_{k-1}\end{pmatrix} - I\|_2\\
   \nonumber
     &= \|\begin{pmatrix}M-I & 0 \\ 0 & Y_0(\hat{\Phi}_{k-1}-I)\end{pmatrix} + \begin{pmatrix}0 & 0 \\ 0 & Y_0-I\end{pmatrix} +\begin{pmatrix}0 & X_0\hat{\Phi}_{k-1} \\ N & 0\end{pmatrix} \|_2\\ 
   \nonumber
    &\leq \max\{\|M-I\|_2, \|Y_0(\hat{\Phi}_{k-1}-I)\|_2\} + \|Y_0-I\|_2 + \max\{\|N\|_2, \|X_0\hat{\Phi}_{k-1}\|_2\}\\
  \label{eq:norm_Vk_I}
    &\leq \max\{\e, \|Y_0\|_2\|\hat{\Phi}_{k-1}-I\|_2\} + \e^2 + \e.
  \end{align}
\end{subequations}
where $\hat{\Phi}_{k-1} = \Phi_0\cdots \Phi_{k-1}$.

We can estimate $\|\hat{\Phi}_{k-1}-I\|_2$ as follows:
By the induction hypothesis, we assume that we have checked that $\|\log_m(V_{j})\|_2 < \delta_0$ for $j=0,\ldots,k-1$.
Hence, Lemma \ref{lem:BCHlem} ensures that $\|C_{j}\|_2 < \frac{1}{2}  \|C_{j-1}\|_2 <\ldots < \frac{1}{2^{j}}\|C_0\|_2  < \frac{\tilde{\e}}{2^{j}}$
for the lower diagonal block of $\log_m(V_{j})$, $j=0,\ldots,k-1$.
As above, this gives $\|\Phi_j - I\|_2 \leq \|C_{j}\|_2 <\frac{\tilde{\e}}{2^{j}}$.
We thus may write $\Phi_j = I + (\Phi_j- I)=: I + \Gamma_j$ with $\|\Gamma_j\|_2 =: g_j < \frac{\tilde{\e}}{2^{j}}$.
This gives
\begin{equation}
\label{eq:aux_Lemma5}
\|\hat{\Phi}_{k-1}-I\|_2 = \|(I + \Gamma_1)\cdots(I + \Gamma_{k-1})-I\|_2
 \leq (1+g_1)\cdots(1+g_{k-1}) - 1.
\end{equation}
It holds
\[
 \ln\left(\prod_{j=0}^{k-1}{(1+g_j)}\right) = \sum_{j=0}^{k-1}{\ln(1+g_j)} \leq \sum_{j=0}^{k-1}{g_j}\leq \sum_{j=0}^{\infty}{\frac{\tilde{\e}}{2^{j}}} = 2\tilde{\e}.
\]
Using this estimate in \eqref{eq:aux_Lemma5} gives
\[
  \|\hat{\Phi}_{k-1}-I\|_2 < e^{2\tilde{\e}} -1
\]
and we finally arrive at
\[
  \|E_k\|_2 \leq (e^{2\tilde{\e}} -1) + \e^2 + \e = \hat{\e}.
\]
Recalling \eqref{eq:norm_Vk_I}, we have $V_k = I+E_k$ with $\|E_k\|_2< \hat{\e}$ at every iteration $k$.
By  Lemma \ref{lem:norm_startV0}, $\|\log_m(V_k)\|_2<  \hat{\e} \frac{\sqrt{1-\frac{\hat{\e}^2}{4}}}{1-\frac{\hat{\e}^2}{2}}$
and we see that the postulate on the size of $\e$ is such that $\|\log_m(V_k)\|_2 < \delta_0$.
Thus Lemma \ref{lem:BCHlem} indeed applies at iteration $k$, which closes the induction.
\end{proof}
\tc{{\bf Remark:} 
The inequality $\hat{\e} \frac{\sqrt{1-\frac{\hat{\e}^2}{4}}}{1-\frac{\hat{\e}^2}{2}} < \delta_0$ holds precisely
for $\hat{\e} < \sqrt{2}\left(1-\frac{1}{\sqrt{1+\delta_0^2}}\right)^{\frac{1}{2}}=:\hat{\e}_0$.
A further calculations shows that if $\e< 0.0912$, then 
$\hat{\e}=(e^{2\tilde{\e}} -1) + \e^2 + \e<\hat{\e}_0$, i.e., the conditions of Lemma \ref{lem:preserve_norms} hold,
for all $\e< 0.0912$.}

With the tools established above at hand, we are now in a position to prove Theorem \ref{thm:conv_thm}.
\begin{proof}[Theorem \ref{thm:conv_thm}]
 Let $(V_k)_{k\in\N_0}$ be the sequence of orthogonal matrices generated by Alg. \ref{alg:Stlog}.
 By Lemma \ref{lem:BCHlem} and Lemma \ref{lem:preserve_norms}, it holds 
 \begin{equation}
  \label{eq:conv_C_k}
  \|\log_m{V_k}\|_2 :=\|\begin{pmatrix}A_k & -B_k^T \\ B_k & C_k\end{pmatrix}\|_2<\frac{1}{2}(\sqrt{5}-1), \quad
    \|C_{k+1}\|_2  < \alpha^{k+1} \|C_{0}\|_2
\end{equation}
  for all $k \geq 0$, where $ 0<\alpha <\frac{1}{2}$.
 From this equation and the continuity of the matrix exponential, we obtain
 \begin{equation}
 \label{eq:phi_limit}
  \lim_{k\rightarrow\infty} W_k = \lim_{k\rightarrow\infty}\begin{pmatrix} I_p & 0 \\ 0 & \exp_m(-C_k)\end{pmatrix}
  = \begin{pmatrix} I_p & 0 \\ 0 & I_p\end{pmatrix}.
 \end{equation}
 The convergence result is now an immediate consequence of Alg. \ref{alg:Stlog}, step 10.
 The upper bound on the iteration count required for numerical convergence
 is also obvious from \eqref{eq:conv_C_k}.
\end{proof}
\section{Examples and experimental results}
\label{sec:experiments}
In this section, we discuss a special case that can be
treated analytically. Following, we present numerical results
on the performance of Alg. \ref{alg:Stlog}.
\subsection{A special case}
Here, we consider the special situation, where the two points 
$U,\tilde{U} \in St(n,p)$ are such that their columns span the same 
subspace.\footnote{We may alternatively express this by saying that $[U]:=\mbox{colspan}(U)$
and $[\tilde{U}]:=\mbox{colspan}(\tilde{U})$ are the same points on the Grassmann manifold
$[U] = [\tilde{U}] \in Gr(n,p)$.}
Hence, there exists an orthogonal matrix $M\in O_{p\times p}$ such that
$\tilde{U} = UM = UM + (I-UU^T)0$.
In this case, Alg. \ref{alg:Stlog} produces the initial matrices
$V_0  = \begin{pmatrix}M & 0 \\ 0 & Y_0\end{pmatrix}$
and $\Phi_0 = \exp_m(-\log_m(Y_0)) = Y_0^{-1}$.
Note that the corresponding 
$W_0=\begin{pmatrix}I_p & 0 \\ 0 & Y_0^{-1}\end{pmatrix}$ commutes with $V_0$.
Thus, we have the reduced BCH formula 
$\log_m(V_0W_0) = \log_m(V_0) + \log_m(W_0) = \begin{pmatrix}\log_m(M) & 0 \\ 0 & 0\end{pmatrix}$,
i.e., Alg. \ref{alg:Stlog} converges after a single iteration
and gives 
\begin{equation}
\label{eq:specialLog}
  Log_U^{St}(UM)= U \log_m(M). 
\end{equation}
(Of course, it is also straight forward to show this directly without invoking Alg. \ref{alg:Stlog}.)
Let $\sigma(M) = \{e^{i\varphi_1},\dots, e^{i\varphi_p}\}$ be the spectrum of $M\in O_{p\times p}$ and
suppose that $M$ is such that none of its eigenvalues is on the negative real axis,
i.e., $\varphi_j \in (-\pi, \pi)$.
Then, the maximal Riemannian distance between two points $U$ and $UM$ is bounded by
\begin{subequations}
 \begin{align}
  \nonumber
     dist(U,UM) &= \sqrt{\langle U\log_m(M), U\log_m(M) \rangle_U}\\
  \nonumber
    & = \left(
        \frac{1}{2} tr(\log_m(M)^T\log_m(M))\right)^{\frac{1}{2}}  
      = \biggl(\frac{1}{2} \sum_{j=1}^p{\varphi_j^2}\biggr)^{\frac{1}{2}}.
 \end{align}
\end{subequations}
As a consequence 
\[ 
  dist(U,UM) < 
        \left\{
        \begin{array}{ll}
         \sqrt{\frac{p}{2}} \pi, & \quad  p \mbox{ even},\smallskip\\
         \sqrt{\frac{p-1}{2}} \pi, & \quad  p \mbox{ odd}.
        \end{array}
         \right.
\]
The latter fact holds, because the eigenvalues of $M$ come in complex conjugate pairs.
Hence, if $p$ is odd, there is at least one real eigenvalue $\lambda_j = e^{i\varphi_j}$
and because $\varphi_j \in (-\pi, \pi)$, there is at least one zero argument $\varphi_j = 0$.
Related is \cite[eq. (2.15)]{EdelmanAriasSmith1999}.
%
\subsection{Numerical performance}
\label{sec:experiments1}
First, we try to mimic the experiments featured in \cite[\S 5.4]{Rentmeesters2013}.
Fig. 5.5 (lower left) of the aforementioned reference shows the average iteration count
when applying the optimization-based Stiefel logarithm to matrices within a Riemannian
annulus of inner radius $0.4\pi$ and outer radius $0.44\pi$ around $(I_p, 0)^T\in St(n,p)$
for dimensions of $n=10, p=2$.
Convergence is detected, if $\|C_k\|_F < \tau=10^{-7}$, where $C_k$ is the same as in Alg. \ref{alg:Stlog}.
(\cite[Alg. 4, p.~91]{Rentmeesters2013} uses $\tau^2 <10^{-14}$).
Since \cite[\S 5.4]{Rentmeesters2013} does not list the precise input data,
we create comparable data randomly.
To this end, we fix an arbitrary point $U\in St(10,2)$
and create artificially but randomly another point $\tilde{U}\in St(10,2)$
such that the Riemannian distance from $U$ to $\tilde{U}$ is exactly $0.44\pi$.
For full comparability, we replace the $2$-norm in Alg. \ref{alg:Stlog}, line 7 with the Frobenius norm.
We average over $1000$ random experiments and arrive at an average iteration count of $\bar{k}=7.83$.
A MATLAB script that performs the required computations is available in Appendix \ref{supp:ex52}.
When the distance of $U$ and $\tilde{U}$ is lowered to $0.4\pi$, the average iteration count drops
to a value of $\bar{k}=6.92$.

As a second experiment, we now return to the $2$-norm and 
lower the convergence threshold to $\|C_k\|_2 < \tau = 10^{-13}$
in the convergence criterion of Alg. \ref{alg:Stlog}.
We create randomly points $U, \tilde{U} \in St(n,p)$ that are also 
a Riemannian distance of $0.44\pi$ away from each other,
where we consider various different dimensions $(n,p)$, see Table \ref{tab:convhist}.
We apply Alg. \ref{alg:Stlog} to compute $\Delta = Log_U^{St}(\tilde{U})$.
\begin{table}[htbp]
\tc{
  \caption{Convergence of Alg. \ref{alg:Stlog} for random data to an accuracy of $\|C_k\|_2\leq 10^{-13}$.}
  \label{tab:convhist}
  \centering
  \begin{tabular}{|c|c|c|c|c|c|} \hline
   $(n,p)$          & dist$\left(U,\tilde{U}\right)$ & $\|U-\tilde{U}\|_2$ &iters. &  $\|\Delta - Log^{St}_U(\tilde{U})\|_2$ & time \\ \hline
    (10,2)          & $0.44\pi$                      & 1.0179              & 16        &  8.7903e-15                         & 0.01s            \\
    (10,2)          & $0.89\pi$                      & 1.7117              & 95        &  4.1934e-13                         & 0.06s            \\
    (1,000, 200)    & $0.44\pi$                      & 0.1616              & 5         &  1.5119e-14                         & 0.7s             \\
    (1,000, 200)    & $0.89\pi$                      & 0.3256              & 7         &  1.7272e-14                         & 0.8s             \\
    (1,000, 900)    & $0.44\pi$                      & 0.1234              & 4         &  9.6999e-14                         & 16.1s            \\
    (1,000, 900)    & $0.89\pi$                      & 0.2491              & 5         &  7.9052e-14                         & 21.0s            \\
    (100,000, 500)  & $0.44\pi$                      & 0.0875              & 4         &  5.9857e-14                         & 13.1s            \\
    (100,000, 500)  & $0.89\pi$                      & 0.1768              & 5         &  6.1041e-14                         & 14.0s            \\ \hline
  \end{tabular}
}
\end{table}
%
%
%
%
%
%
%
%
\begin{figure}[htbp]
  \centering
  \includegraphics[width=1.0\textwidth]{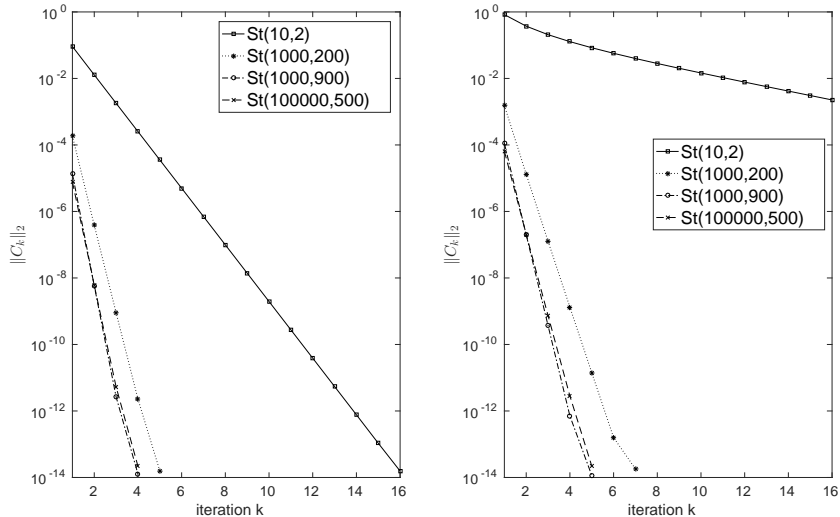}
  \caption{Convergence of Alg. \ref{alg:Stlog} for random data $U,\tilde{U} \in St(n,p)$
           for various $n$ and $p$. 
           Convergence accuracy is set to $\|C_k\|_2\leq 10^{-13}$.
           Left: convergence graphs for dist$(U,\tilde{U})=0.44\pi$; 
           right: for dist$(U,\tilde{U})=0.89\pi$.}
             \label{fig:convplots}
\end{figure}
Fig. \ref{fig:convplots} shows the associated convergence histories.
The associated computation times\footnote{as measured on a Dell desktop computer endowed with 
six processors of type Intel(R) Core(TM) i7-3770 CPU@3.40GHz} are listed
in Table \ref{tab:convhist}.
As can be seen from the figure and the table, Alg. \ref{alg:Stlog} converges slowest (in terms of the iteration count)
in the case of $St(10,2)$. Note that in this case, the constant \tc{$\|U-\tilde{U}\|_2$} that played a major
role in the convergence analysis of Alg. \ref{alg:Stlog} is largest.
Moreover, we observe that the algorithm converges in all test cases even though in only one of the experiments
the theoretical convergence guarantee $\|U_0-\tilde{U}\|_2<0.09$ is satisfied, so that the theoretical bound derived here 
can probably be improved.
Table \ref{tab:convhist} suggests that the impact of the size of \tc{$\|U-\tilde{U}\|_2$}
on the iteration count is \tc{more direct} than that of the actual Riemannian distance.
We repeat the exercise with random data $U, \tilde{U} \in St(n,p)$ that are 
a distance of $0.89\pi$ apart, which is the lower bound for the injectivity 
radius on the Stiefel manifold given in \cite[eq. (5.14)]{Rentmeesters2013}.
In the case of $St(10,2)$, we hit a random matrix pair $U, \tilde{U}$,
where the associated value\tc{$\|U-\tilde{U}\|_2$} is so large 
that the conditions of Theorem \ref{thm:conv_thm}  {\em and} Lemma \ref{lem:BCHlem}, Lemma \ref{lem:preserve_norms} do not hold.
In fact, we have $\|\log_m(V_0)\|_2 = 3.141$ for the starting point of Alg. \ref{alg:Stlog}
in this case, \tc{which is close to $\pi$}. Yet, the algorithm converges, but very slowly so, see Table \ref{tab:convhist}, second row
and Fig. \ref{fig:convplots}, right side.
In all of the other cases,
Alg. \ref{alg:Stlog} converges in well under ten iterations, even for the larger test cases.

A MATLAB script that performs the required computations is available in Appendix \ref{supp:ex52}.
\subsection{Dependence of the convergence on the Riemannian and the Euclidean distance}
\label{sec:experiments2}
In this section, we examine the convergence of Alg. \ref{alg:Stlog} depending on the Riemannian
distance $dist(U,\tilde{U})$ and the distance $\|U - \tilde{U}\|_2$ in the Euclidean operator-$2$-norm.
To this end, we create a random point $U\in St(n,p)$ with MATLAB by computing the thin qr-decomposition
of an $(n\times p)$ matrix with entries sampled uniformly from $(0,1)$.
Likewise, we create a random tangent vector $\Delta\in T_USt(n,p)$ by chosing randomly a skew-symmetric matrix $A= \tilde{A}-\tilde{A}^T\in \R^{p\times p}$
and a matrix $T\in \R^{n\times p}$, where the entries of $\tilde{A}$ and $T$ are again uniformly sampled from $(0,1)$,
and setting $\tilde{\Delta} = UA + (I-UU^T)T$. We normalize $\tilde{\Delta}$ according to the canonical metric 
$\Delta = \frac{\tilde{\Delta}}{\sqrt{\langle \tilde{\Delta}, \tilde{\Delta}\rangle_U}}$, see Section \ref{sec:Stiefel_essentials}.
In this way, we obtain for every $t\in [0,\pi)$ a point $\tilde{U}=U(t)$ that is a Riemannian distance of $dist(U,U(t)))= \| t\Delta\|_U = t$ away from $U$.

We discretize the interval $[0.1,0.9\pi)$ by $100$ equidistant points $\{x_k| k=1,\ldots,100\}$ and compute
\begin{itemize}
 \item the number of iterations until convergence when computing $\log^{St}_U(U(t_k))$ with Alg. \ref{alg:Stlog} for $k=1,\ldots,100$.
 \item the distance in spectral norm $\|U-U(t_k)\|_2$, $k=1,\ldots,100$.
 \item the norm of the matrix logarithm of the first iterate $\|\log_m(V_0)\|_2$ from Alg. \ref{alg:Stlog}, step 3.
\end{itemize}

The results are displayed in Figures \ref{fig:Plot_dist_vs_conv_St10000_400} -- \ref{fig:Plot_dist_vs_conv_St4_2}
for dimensions of $St(10,000, 400)$, $St(100,10)$ and $St(4,2)$, respectively. In all cases, the convergence threshold was set to $\|C_l\|_2<\tau = 10^{-13}$.
The algorithm converged in all cases, where $\|\log_m(V_0)\|_2<\pi$ and produced a tangent vector $\Delta(t_k) := \log^{St}_U(U(t_k))$
of accuracy $\|\Delta(t_k) - t_k\Delta\|_2<10^{-13}$.
A MATLAB script that performs the required computations is available in Appendix \ref{supp:ex53}.
\begin{figure}[htbp]
  \centering
  \includegraphics[width=1.0\textwidth]{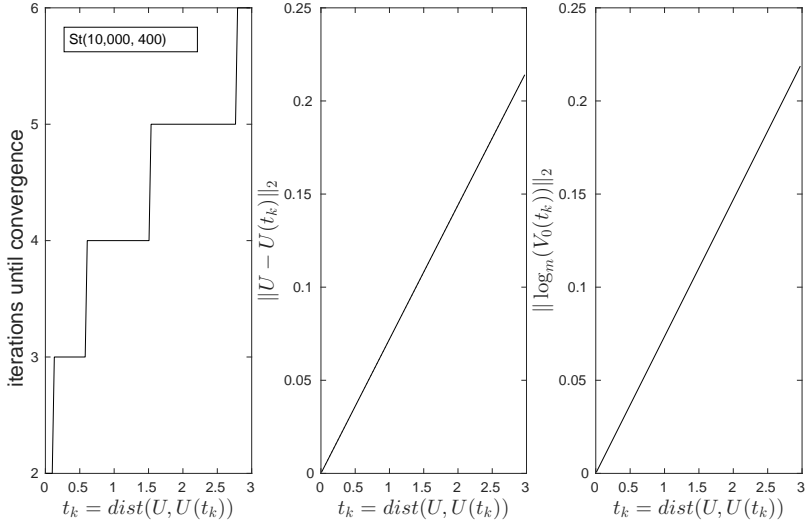}
  \caption{Convergence of Alg. \ref{alg:Stlog} for $U,\tilde{U}=U(t_k) = Exp^{St}_U(t_k\Delta) \in St(n,p)$,
	    where $\Delta$ is a random tangent vector of canonical norm $1$ and $n=10,000$, $p=400$. 
           Convergence accuracy is set to $\|C_k\|_2\leq 10^{-13}$.
           Left: number of iterations until convergence vs. $dist(U,\tilde{U})$;
           middle: $\|U - \tilde{U}\|_2$ vs. $dist(U,\tilde{U})$;
           right:  $\|\log_m(V_0)\|_2$ vs. $dist(U,\tilde{U})$.}
             \label{fig:Plot_dist_vs_conv_St10000_400}
\end{figure}
In the case of $St(4,2)$, the algorithm starts to fail for $t_k\approx\frac{\pi}{2}$, where $\|\log_m(V_0)\|_2$ jumps to a value
of $\pi$. This indicates that $V_0$ features (up to numerical errors) an eigenvalue $\lambda = -1$ so that the standard principal
matrix logarithm is no longer well-defined. In all the experiments that were conducted, this behavior was observed only
for small values of $p<8$, while there was never produced a random data set where Alg. \ref{alg:Stlog} failed for $t<0.9\pi$ and $p>10$.
\begin{figure}[htbp]
  \centering

  \includegraphics[width=1.0\textwidth]{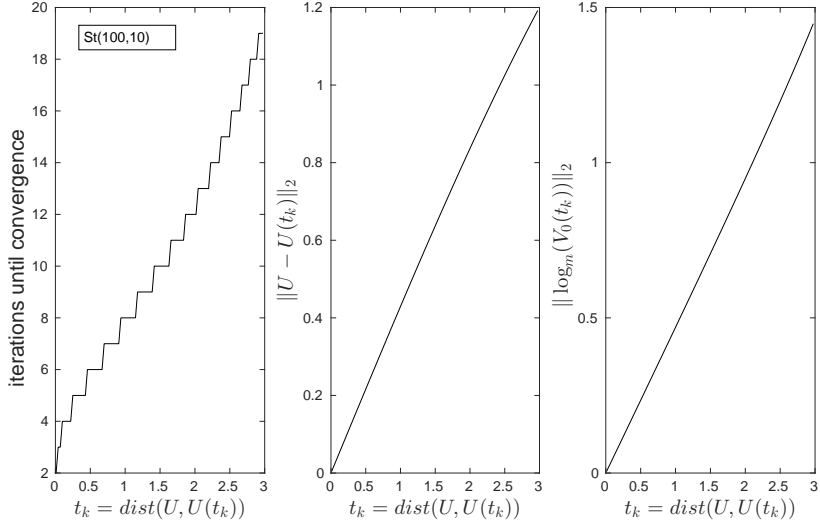}
  \caption{Same as Fig. \ref{fig:Plot_dist_vs_conv_St10000_400}, but for $n=100$, $p=10$.}
    \label{fig:Plot_dist_vs_conv_St100_10}
\end{figure}
\begin{figure}[htbp]
  \centering
  \includegraphics[width=1.0\textwidth]{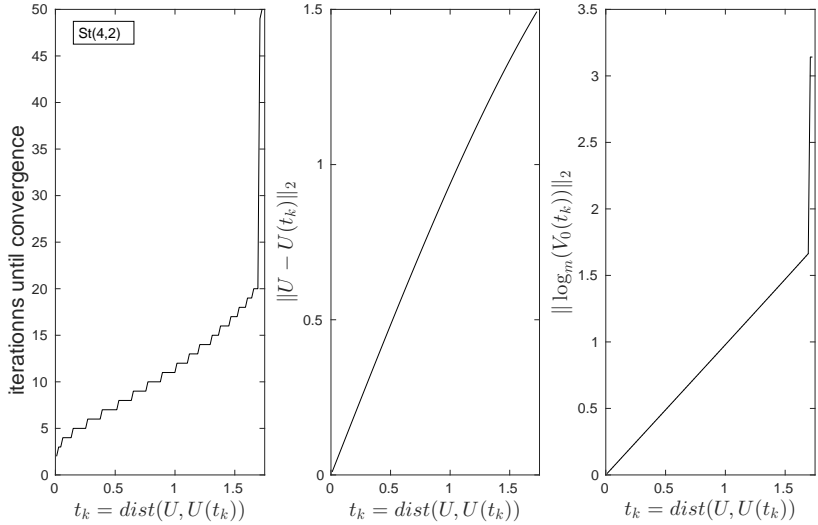}
  \caption{Same as Fig. \ref{fig:Plot_dist_vs_conv_St10000_400}, but for $n=4$, $p=2$.}
    \label{fig:Plot_dist_vs_conv_St4_2}
\end{figure}
The figures suggest that for small column-numbers $p$, the ratio between the Riemannian distance $dist(U,\tilde{U})$ and 
the spectral distance $\|U-\tilde{U}\|_2$ is smaller than in higher dimensions.
Moreover, for smaller $p$, it seems to be more likely to hit a random tangent direction along which Alg. \ref{alg:Stlog}
fails early than for higher $p$. This may partly be explained by the star-shaped nature of the domain of injectivity of
the Riemannian exponential, \cite[Lemma 5.7]{Lee1997riemannian}, and the richer variety of directions in higher dimensions.

From these observations, it is tempting to conjecture that Alg. \ref{alg:Stlog} will converge, whenever $\|\log_m(V_0)\|_2<\pi$.
However, these results are based on a limited notion of randomness and a more thorough examination
of the numerical behavior of Alg. \ref{alg:Stlog} is required to obtained conclusive results,
which is beyond the scope of this work.
Note that the domain of convergence of Alg. \ref{alg:Stlog} is related to the injectivity radius of $St(n,p)$
but it does not have to be the same.
In Appendix \ref{supp:critical_convergence} from the supplement, we state an explicit example in $St(4,2)$, where Alg. \ref{alg:Stlog}
produces a first iterate $V_0$ with $\lambda = -1$ for an input pair $U, \tilde{U}\in St(4,2)$ with $dist(U,\tilde{U}) = \frac{\pi}{2}$,
while the injectivity radius is estimated to be $\approx 0.71\pi$ in \cite[\S 5]{Rentmeesters2013}.
An analytical investigation in $St(4,2)$ might be possible and may shed more light on the precise value of the Stiefel manifold's injectivity
radius.
%
%
\section{Conclusions and outlook}
\label{sec:conclusions}
We have presented a matrix-algebraic derivation of an algorithm for evaluating
the Riemannian logarithm $Log^{St}_U(\tilde{U})$ on the Stiefel manifold.
In contrast to \cite[Alg. 4, p.~91]{Rentmeesters2013},
the construction here is not based on an optimization procedure but on an iterative
solution to a non-linear matrix equation.
Yet, it turns out that both approaches lead to essentially the same numerical scheme.
More precisely, our Alg. \ref{alg:Stlog} coincides with \cite[Alg. 4, p.~91]{Rentmeesters2013},
when a unit step size is employed in the optimization scheme associated
with the latter method.
Apart from its comparatively simplicity, 
a key benefit is that our matrix-algebraic approach
allows for a convergence analysis that does not require estimates on gradients nor Hessians
and we are able to prove that the convergence rate of Alg. \ref{alg:Stlog}
is at least linear.
This, in turn, proves the local linear convergence of \cite[Alg. 4, p.~91]{Rentmeesters2013}
when using a unit step size.
The algorithm shows a very promising performance in numerical experiments,
even when the dimensions $n,p$ become large.

So far, we have carried out a theoretical {\em local} convergence analysis.
Open questions to be tackled in the future include estimates
on how large the convergence domain of Alg. \ref{alg:Stlog} is in terms of
the Riemannian distance of the input points $dist(U,\tilde{U})$.
This is related with the question of determining the injectivity radius
of the Stiefel manifold. Estimates on the injectivity radius are featured
in \cite[\S 5.2.1]{Rentmeesters2013}.

\appendix
\section{A sharper majorizing series for Goldberg's Exponential series}
\label{app:improvedGoldberg}
As an alternative to Dynkin's BCH formula of nested commutators, 
Goldberg has shown in \cite{Goldberg1956} that the solution
to the exponential equation
\[
 \exp_m(X)\exp_m(Y) = \exp_m(Z)
\]
can be written as a formal series
\begin{equation}
\label{eq:Goldberg}
  Z =  X + Y + \sum_{k=2}^{\infty}{z_k(X,Y)}, \quad z_k(X,Y) = \sum_{w, |w|=k}{g_w w}.
\end{equation}
Each term $z_k(X,Y)$ in \eqref{eq:Goldberg} is the sum over all {\em words} of length $k$ in the alphabet $\{X,Y\}$.
For example, $YXYX^2$ and $X^2YXY^2$ are such words of length $5$ and $6$ and thus contributing 
to $z_5(X,Y)$ and $z_6(X,Y)$, respectively.
The coefficients are rational numbers $g_w\in \Q$, called Goldberg coefficients.

Thompson \cite{Thompson1989} has shown that the series converges provided
that $\|X\|,\|Y\| \leq \mu < 1$ for any submultiplicative norm $\|\cdot\|$.
More precisely, his result is that $\|z_k(X,Y)\| = \|\sum_{w, |w|=k}{g_w w}\| \leq 2\mu^k$
for $k\geq 2$, see also \cite[eq. 2]{Thompson1989b}.
In the next lemma, we improve this bound by cutting the factor $2$.
\begin{lemma}
 \label{lem:GoldbergSeries}
 Let $\|X\|,\|Y\| \leq \mu < 1$.
 The Goldberg series is majorized by
  \[
   \|Z\| < \|X\| + \|Y\| + \sum_{k=2}^{\infty}{\mu^k}.
  \]
\end{lemma}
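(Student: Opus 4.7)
The plan is to show that $\|z_k(X,Y)\|\le \mu^k$ for every $k\ge 2$ (with strict inequality at least at one index), so that summing over $k$ and using $\|X\|+\|Y\|$ separately for the first-order terms yields the claimed bound, and strict inequality is inherited from $\mu<1$.

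First, I would invoke Goldberg's explicit representation of the coefficients $g_w$ from \cite{Goldberg1956}: writing a word $w$ of length $k$ in its run-length form $w=C_1^{n_1}C_2^{n_2}\cdots C_m^{n_m}$ with $C_i\in\{X,Y\}$, $C_i\ne C_{i+1}$, $n_1+\cdots+n_m=k$, the coefficient $g_w$ is given by a definite integral $\int_0^1 \varphi_w(t)\,dt$ where $\varphi_w$ is an explicit rational polynomial whose $L^1$ mass on $[0,1]$ is controlled by the run pattern. Simultaneously, sub-multiplicativity of the norm together with the assumption $\|X\|,\|Y\|\le \mu$ gives the pointwise bound $\|w\|\le \|X\|^{n_X(w)}\|Y\|^{n_Y(w)}\le \mu^k$, so that
\begin{equation*}
\|z_k(X,Y)\|\;\le\;\Bigl(\sum_{|w|=k}|g_w|\Bigr)\mu^k .
\end{equation*}

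Thus the problem reduces to the purely combinatorial estimate
\begin{equation*}
S_k := \sum_{|w|=k}|g_w|\;\le\;1\qquad (k\ge 2),
\end{equation*}
and Thompson's inequality $S_k\le 2$ already exists in \cite{Thompson1989,Thompson1989b}. To gain the factor of $2$, I would pair each word with its companion under the involution $X\leftrightarrow Y$ (or the order-reversal $w\mapsto w^{\mathrm{rev}}$), noting that Goldberg's integral representation makes $|g_w|=|g_{w^{\mathrm{rev}}}|$ (or the analogous identity for the swap). This lets one fold Thompson's sum onto a canonical half of the words and bound $S_k$ by a single integral $\int_0^1|\psi(t)|\,dt$ whose value is $\le 1$ rather than $\le 2$; the factor-of-two wastefulness in Thompson's argument comes precisely from triangulating over both halves independently.

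Assembling these ingredients, the triangle inequality applied term-by-term in $Z=X+Y+\sum_{k\ge 2}z_k(X,Y)$ gives $\|Z\|\le \|X\|+\|Y\|+\sum_{k\ge 2}\mu^k$, and strictness follows from the fact that at least one word contributes $|g_w|\|w\|<|g_w|\mu^k$ whenever $\|X\|<\mu$ or $\|Y\|<\mu$, and even when $\|X\|=\|Y\|=\mu$ the geometric majorant is strict because $S_k<1$ for, say, $k=2$ (only the two words $XY$ and $YX$ arise, with well-known coefficients $\pm\tfrac12$). The main obstacle is the combinatorial/integral analysis establishing $S_k\le 1$; I expect the symmetry folding to be the cleanest route, but it may be necessary to inspect Goldberg's derivation directly and sharpen one triangle inequality in his proof rather than invoke Thompson's bound as a black box.
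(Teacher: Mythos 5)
Your reduction of the lemma to the combinatorial estimate $S_k:=\sum_{|w|=k}|g_w|\le 1$ is exactly the right target, and it matches what the paper ultimately establishes. However, the mechanism you propose for gaining the factor of $2$ over Thompson --- pairing $w$ with $w^{\mathrm{rev}}$ (or with its $X\leftrightarrow Y$ swap) and ``folding'' the sum onto a canonical half of the words --- does not work. The sum $\sum_{|w|=k}|g_w|$ already counts each word exactly once; the symmetry $|g_w|=|g_{w^{\mathrm{rev}}}|$ only lets you rewrite $S_k$ as twice the sum over a half, which leaves the bound unchanged. Thompson's factor of $2$ is not an artifact of bounding two halves independently: it traces to the specific inequality $m\binom{m-1}{\lfloor m/2\rfloor}\ge 2^{m-1}$ used inside his estimate of the Goldberg integrals. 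The paper gains the factor of $2$ by sharpening precisely that inequality to $m\binom{m-1}{\lceil (m-1)/2\rceil}>2^{m}$, which it proves by induction but which only holds for $m\ge 7$; the remaining cases $k=2,\dots,6$ are then checked by hand against the explicit coefficient lists of Van-Brunt and Visser (e.g.\ $S_3\le 8/12$, $S_4\le 6/24$, $S_5\le 1$, $S_6\le 28/60$). Your proposal contains no substitute for either of these two steps, and your own closing hedge (``it may be necessary to inspect Goldberg's derivation directly and sharpen one triangle inequality'') is effectively an acknowledgement that the central estimate is not established.

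A secondary error: you claim strictness via $S_2<1$, but $z_2=\tfrac12(XY-YX)$ gives $S_2=\tfrac12+\tfrac12=1$ exactly, so the $k=2$ term cannot supply the strict inequality. Strictness does hold, but it has to come from the higher-order terms, where the coefficient sums are genuinely below $1$ (already at $k=3$ one has $S_3\le 2/3$).
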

\begin{proof}
 One ingredient of Thompson's proof is the following basic estimate
 on binomial terms:
  \begin{equation}
  \label{eq:thompson_aux}
    m \begin{pmatrix}m-1 \\ \lfloor \frac{m}{2} \rfloor \end{pmatrix} \geq 2^{m-1}.
  \end{equation}
 Here, $\lfloor x \rfloor$ denotes the largest integer smaller or equal to $x$.
 Thompson's argument is that 
 $2^{m-1} = (1+1)^{m-1} = \sum_{l=0}^{m-1}{\begin{pmatrix}m-1 \\  l \end{pmatrix}}$
 and that $\begin{pmatrix}m-1 \\ \lfloor \frac{m}{2} \rfloor \end{pmatrix}$ is the largest 
 out of the $m$ terms in the binomial sum. (It appears twice, if $m-1$ is odd.)
 In the following, we prefer to write this term with using the ceil-operator as 
  $\begin{pmatrix}m-1 \\ \lfloor \frac{m}{2} \rfloor \end{pmatrix}
   = \begin{pmatrix}m-1 \\ \lceil \frac{m-1}{2} \rceil \end{pmatrix}$,
 because in this way, the same index $m-1$ appears in the upper and lower entry
 of the binomial coefficient.

 For larger $m$, the inequality \eqref{eq:thompson_aux} can in fact be improved by a factor of $2$:
  \begin{equation}
    \label{eq:thompson_aux2}
  \mbox{Claim: }  m \begin{pmatrix}m-1 \\ \lceil \frac{m-1}{2} \rceil  \end{pmatrix} > 2^{m} \mbox{ for all } m\geq 7.
  \end{equation}
  For $m=7$, we have $7\begin{pmatrix}7-1 \\ \lceil \frac{7-1}{2} \rceil  \end{pmatrix} = 7\cdot 20 =140 > 128 = 2^7$;
  for $m=8$, the inequality evaluates to $280>256=2^8$.
  To prove the claim, we proceed by induction.
  \paragraph{Case 1: ``$m$ even''}
  In this case, $\lceil \frac{m}{2} \rceil = \frac{m}{2} = \lceil \frac{m-1}{2} \rceil$ and
  \begin{subequations}
    \begin{align}
     \nonumber
      (m+1) \begin{pmatrix}m \\ \lceil \frac{m}{2} \rceil  \end{pmatrix} 
        & = (m+1) \left(\begin{pmatrix}m-1 \\ \frac{m}{2}-1  \end{pmatrix} + 
                        \begin{pmatrix}m-1 \\ \frac{m}{2}  \end{pmatrix}\right)\\
      \label{eq:aux1}
        & = 2(m+1) \begin{pmatrix}m-1 \\ \lceil \frac{m-1}{2}\rceil  \end{pmatrix}
          > 2(m+1)\frac{2^m}{m} > 2^{m+1},
    \end{align}
  \end{subequations}
 where we have used the symmetry in the Pascal triangle ($m-1$ is odd) and the induction hypothesis
 to arrive at \eqref{eq:aux1}.
 \paragraph{Case 2: ``$m$ odd''}
  In this case, $\lceil \frac{m}{2} \rceil = \frac{m+1}{2}$ and
  \begin{subequations}
    \begin{align}
     \nonumber
      (m+1) \begin{pmatrix}m \\ \lceil \frac{m}{2} \rceil  \end{pmatrix} 
        & = (m+1) \left(\begin{pmatrix}m-1 \\ \frac{m+1}{2}-1  \end{pmatrix} + 
                        \begin{pmatrix}m-1 \\ \frac{m+1}{2}  \end{pmatrix}\right)\\
      \label{eq:aux2}
        & = (m+1) \left(\begin{pmatrix}m-1 \\ \lceil \frac{m-1}{2} \rceil  \end{pmatrix} + 
                        \begin{pmatrix}m-1 \\ \lceil \frac{m}{2} \rceil  \end{pmatrix}\right).
    \end{align}
  \end{subequations}
 Note that $\begin{pmatrix}m-1 \\ \lceil \frac{m}{2} \rceil  \end{pmatrix}$ is the second-to-largest
 term in the binomial expansion of $(1+1)^{m-1}$.
 Moreover, since $m-1$ is even, the relation to the largest term is 
 \[
  \begin{pmatrix}m-1 \\ \lceil \frac{m}{2} \rceil  \end{pmatrix} 
  = \frac{m-1}{m+1} \begin{pmatrix}m-1 \\ \lceil \frac{m-1}{2} \rceil  \end{pmatrix}.
 \]
 Substituting in \eqref{eq:aux2} and applying the induction hypothesis gives
 \[
  (m+1) \begin{pmatrix}m \\ \lceil \frac{m}{2} \rceil  \end{pmatrix} 
    > (m+1) \left( \frac{2^m}{m} + \frac{m-1}{m+1}\frac{2^m}{m}  \right)
    = \left(\frac{m+1}{m}+\frac{m-1}{m} \right)2^m = 2^{m+1}.
 \]
 Using \eqref{eq:thompson_aux2} rather than \eqref{eq:thompson_aux}
 in Thompson's original proof
 leads to the improved bound of $\|z_k(X;Y)\|\leq \mu^k$ for
 $k\geq 7$.

 We tackle the terms involving words of lengths $k=2,3,\ldots, 6$ manually.
 The reference \cite{VanBruntVisser2015} lists explicit expressions of the 
 summands in the Goldberg BCH series up to $z_8$.
 The first three of them read 
\begin{subequations}
    \begin{align}
    \nonumber z_2(X,Y) & = \frac{1}{2}(XY-YX)
                       \Rightarrow \|z_2(X,Y)\| \leq \frac{2}{2}\mu^2. \checkmark\\
    \nonumber z_3(X,Y) & = \frac{1}{12}\bigl(X^2Y-2XYX+XY^2+YX^2-2YXY + Y^2X\bigr)\\
    \nonumber          & \Rightarrow \|z_3(X,Y)\| \leq \frac{8}{12} \mu^3. \checkmark\\
    \nonumber z_4(X,Y) & = \frac{1}{24}\bigl(X^2Y^2-2XYXY+2YXYX-Y^2X^2\bigr)
                        \Rightarrow \|z_4(X,Y)\| \leq \frac{6}{24} \mu^4. \checkmark
  \end{align}
\end{subequations}
 The expressions for $z_5(X,Y)$ and $z_6(X,Y)$ are too cumbersome to be restated here.
 However, for our purposes, a very rough counting argument is sufficient:
 The expression for $z_5(X,Y)$ features $30$ length-$5$ words with non-zero Goldberg coefficient
 and the largest Goldberg coefficient is $\frac{1}{30}$.
 Hence, $\|z_5(X,Y)\| = \|\sum_{w, |w|=5}{g_w w}\| < \frac{30}{30} \mu^5. \checkmark$
 (A more careful consideration reveals $\|z_5(X,Y)\| \leq \frac{176}{720} \mu^5$.)

 The expression for $z_6(X,Y)$ features $28$ length-$6$ words with non-zero Goldberg coefficient
 and the largest Goldberg coefficient is $\frac{1}{60}$.
 Hence, $\|z_6(X,Y)\| = \|\sum_{w, |w|=6}{g_w w}\| \leq \frac{28}{60} \mu^6. \checkmark$
\end{proof}
\section{Norm bound for the matrix logarithm}
\label{app:logm_bound}
\begin{proposition}
\label{prop:log_exp_bound}
Let $C\in \R^{p\times p}$ be skew-symmetric with $\|C\|_2 < \pi$.
Then \[\|\exp_m(C)-I\|_2<\|C\|_2.\]
\end{proposition}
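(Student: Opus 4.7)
The plan is to reduce the matrix inequality to the elementary scalar inequality $2\sin(t/2)<t$ on $(0,\pi)$ by diagonalizing. First, I would observe that $C$ being real skew-symmetric means $iC$ is Hermitian, so $C$ is normal and unitarily diagonalizable (over $\C$) with purely imaginary spectrum $\{i\lambda_1,\dots,i\lambda_p\}$, $\lambda_j\in\R$. In particular, $\|C\|_2=\rho(C)=\max_j|\lambda_j|$.

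Next, since $C$ is normal, so is $\exp_m(C)-I$, being a polynomial (limit) in $C$; hence its spectral norm also equals its spectral radius. Its eigenvalues are $e^{i\lambda_j}-1$, and a direct computation gives
\[
  |e^{i\lambda_j}-1| \;=\; |e^{i\lambda_j/2}|\cdot|e^{i\lambda_j/2}-e^{-i\lambda_j/2}|\;=\;2|\sin(\lambda_j/2)|.
\]
Therefore
\[
  \|\exp_m(C)-I\|_2 \;=\; \max_j 2|\sin(\lambda_j/2)|.
\]

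Now the map $t\mapsto 2\sin(t/2)$ is strictly increasing on $[0,\pi]$, so under the assumption $\|C\|_2<\pi$ (which forces every $|\lambda_j|<\pi$) I may pull the maximum inside:
\[
  \max_j 2|\sin(\lambda_j/2)| \;=\; 2\sin\!\left(\tfrac{1}{2}\max_j |\lambda_j|\right) \;=\; 2\sin\!\left(\tfrac{\|C\|_2}{2}\right).
\]
Finally, invoking the elementary inequality $2\sin(x/2)<x$ for $0<x<\pi$ (immediate from $\sin(x/2)<x/2$, which follows by comparing derivatives or from the Taylor remainder) yields the claim for $C\neq 0$; the case $C=0$ is vacuous since the statement is clearly intended for nonzero $C$ (or one reads the inequality as ``$\leq$'' there).

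No step is truly an obstacle: the only subtlety is making the normality argument explicit so that the spectral norm coincides with the spectral radius both for $C$ and for $\exp_m(C)-I$. Once that is in place, the entire proposition reduces to a one-variable trigonometric inequality.
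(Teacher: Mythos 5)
Your proof is correct and follows essentially the same route as the paper's: unitarily diagonalize the skew-symmetric (hence normal) matrix $C$ with purely imaginary eigenvalues $i\varphi_j$, and reduce the claim to the scalar estimate $|e^{i\varphi}-1|<|\varphi|$ on $(0,\pi)$. The paper asserts that scalar inequality directly (chord versus arc, with a reference to a figure), whereas you make it explicit via $|e^{i\varphi}-1|=2|\sin(\varphi/2)|$ and also correctly flag the degenerate case $C=0$, where the inequality is only non-strict.
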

\begin{proof}
 Since $C$ is skew-symmetric, it features an EVD $C = Q \Lambda Q^H$ with $\Lambda = diag(\lambda_1,\ldots,\lambda_p) = diag(i\varphi_1,\ldots, i\varphi_p)$, where $\varphi \in \left(-\pi, \pi\right)$
 and $\max_j |i\varphi_j| = \|C\|_2$.
 Therefore, $\exp_m(C) =  Q \exp_m(\Lambda) Q^H$ with $\exp_m(\Lambda) = diag(e^{i\varphi_1},\ldots, e^{i\varphi_p})$ and
 \[
   \|\exp_m(C)-I\|_2 = \max_j |e^{i\varphi_j} - 1| < \max_j |\varphi_j| = \|C\|_2.
 \]
 (The latter estimate may also be deduced from Fig. \ref{fig:Plot_logm_norm_estimate}.)
\end{proof}

\begin{proposition}
\label{prop:logm_bound}
Let $V\in O_{n\times n}$ be such that $\|V-I\|_2 < r< 1$.
Then \[\|\log_m(V)\|_2<r \frac{\sqrt{1-\frac{r^2}{4}}}{1-\frac{r^2}{2}}.\]
\end{proposition}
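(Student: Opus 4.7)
The plan is to exploit the eigenstructure of the orthogonal matrix $V$. Since $V \in O_{n \times n}$ is normal, it admits a unitary diagonalization $V = Q\Lambda Q^H$ with $\Lambda = \operatorname{diag}(e^{i\varphi_1},\ldots,e^{i\varphi_n})$ and $\varphi_j \in (-\pi,\pi]$. The hypothesis $\|V-I\|_2 < r < 1$ forces $|e^{i\varphi_j}-1| < r < 1$ for every $j$, which excludes $\varphi_j = \pm\pi$ (since $|{-1}-1| = 2$), so the principal matrix logarithm is well-defined and $\log_m(V) = Q\operatorname{diag}(i\varphi_1,\ldots,i\varphi_n)Q^H$. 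Consequently $\|\log_m(V)\|_2 = \max_j |\varphi_j|$, and the problem is reduced to a purely scalar estimate.

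Setting $\rho_j := |e^{i\varphi_j}-1|$ and using the elementary identity $e^{i\varphi}-1 = 2i\sin(\varphi/2)e^{i\varphi/2}$, one obtains $|\sin(\varphi_j/2)| = \rho_j/2$, and since $\rho_j < 1$ places $\varphi_j/2$ well inside $(-\pi/2,\pi/2)$, this yields $|\varphi_j| = 2\arcsin(\rho_j/2)$. So the whole proposition follows once the scalar inequality
\[
  2\arcsin(\rho/2) \;\le\; \rho\,\frac{\sqrt{1-\rho^2/4}}{1-\rho^2/2} \qquad (0 \le \rho < \sqrt{2})
\]
is established, together with strict monotonicity of the right-hand side as a function of $\rho$ on $(0,\sqrt{2})$, so that $\rho_j < r$ yields the strict bound claimed.

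The crux is the scalar inequality, and the key trick is a double-angle substitution. Let $\theta := \arcsin(\rho/2)$, so that $\sin\theta = \rho/2$ and $\cos\theta = \sqrt{1-\rho^2/4}$; then the right-hand side becomes
\[
  \rho\,\frac{\sqrt{1-\rho^2/4}}{1-\rho^2/2}
  = \frac{2\sin\theta\cos\theta}{1-2\sin^2\theta}
  = \frac{\sin(2\theta)}{\cos(2\theta)}
  = \tan(2\theta).
\]
The inequality is therefore equivalent to $2\theta \le \tan(2\theta)$ for $2\theta \in [0,\pi/2)$, which is the standard fact $x \le \tan x$ on $[0,\pi/2)$ (immediate from $(\tan x - x)' = \tan^2 x \ge 0$ with equality at $x=0$). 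Monotonicity of $g(\rho) := \rho\sqrt{1-\rho^2/4}/(1-\rho^2/2)$ follows from differentiation or from the identity $g(\rho) = \tan(2\arcsin(\rho/2))$ combined with the monotonicity of $\tan$ and $\arcsin$. Combining all pieces gives $|\varphi_j| \le g(\rho_j) < g(r)$ for each $j$, hence $\|\log_m(V)\|_2 < g(r)$.

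The only subtle step is spotting the double-angle substitution that converts the stated bound into the well-known $x \le \tan x$; without that reformulation one might be tempted to compare Taylor coefficients of $2\arcsin$ against those of $g$, which is feasible but tedious. Everything else — the spectral reduction, ruling out $\lambda = -1$, and the monotonicity argument — is routine.
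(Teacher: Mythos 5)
Your proof is correct and follows essentially the same route as the paper: reduce to the eigenvalue angles $\varphi_j$ of the orthogonal matrix and bound $|\varphi_j|$ by the quantity $r\sqrt{1-r^2/4}/(1-r^2/2)$, which both arguments identify as the tangent of the extremal angle. The paper reads this off geometrically from the intersection of the circles $\{|z|=1\}$ and $\{|z-1|=r\}$ and applies $\arctan(t)<t$, while your double-angle identity $\tan(2\arcsin(\rho/2))=\rho\sqrt{1-\rho^2/4}/(1-\rho^2/2)$ together with $x\le\tan x$ is the analytic rendering of exactly the same estimate.
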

\begin{proof}
 Let $E = V-I$. The matrices $V$ and $E$ share the same (orthonormal) basis of eigenvectors $Q$
 and the spectrum of $V$ is precisely the spectrum of $E$ shifted by $+1$.
 By assumption, $ r > \|E\|_2 = \max_{\mu\in \sigma(E)}{|\mu|}$.
 Hence, the eigenvalues $\lambda\in \sigma(V)$ are complex numbers of modulus one of the form
 $\lambda = e^{i\alpha} = 1 + \mu$, with $|\mu| < r$.
 Thus, $\lambda$ lies on the unit circle but within a ball of radius $r$ around $1\in \C$,
 see Fig. \ref{fig:Plot_logm_norm_estimate}.
 The maximal angle $\alpha$ for such a $\lambda$ is bounded by the slope of the line that starts in $0\in \C$
 and crosses the points of intersection of the two circles $\{|z| < 1\}$ and $\{|z-1| < r\}$.
 The intersection points are $(x_s,\pm y_s) = \left(1-\frac{r^2}{2}, \pm r\sqrt{1-\frac{r^2}{4}}\right)$.
 Therefore 
 \[
   |\alpha| < \arctan\left(\frac{y_s}{x_s}\right) =  \arctan\left(\frac{r\sqrt{1-\frac{r^2}{4}}}{1-\frac{r^2}{2}}\right) <  r\frac{\sqrt{1-\frac{r^2}{4}}}{1-\frac{r^2}{2}}.
 \]
 As a consequence,
 \[
  \|\log_m(V)\|_2 = \|Q\log_m(\Lambda) Q^H\|_2 
    = \max_{\lambda \in \sigma(V)} |\ln(\lambda)| = \max_{\lambda=e^{i\alpha} \in \sigma(V)} |i\alpha| 
    < r \frac{\sqrt{1-\frac{r^2}{4}}}{1-\frac{r^2}{2}}.
 \]
\end{proof}
\begin{figure}[htbp]
  \centering
  \includegraphics[width=0.9\textwidth]{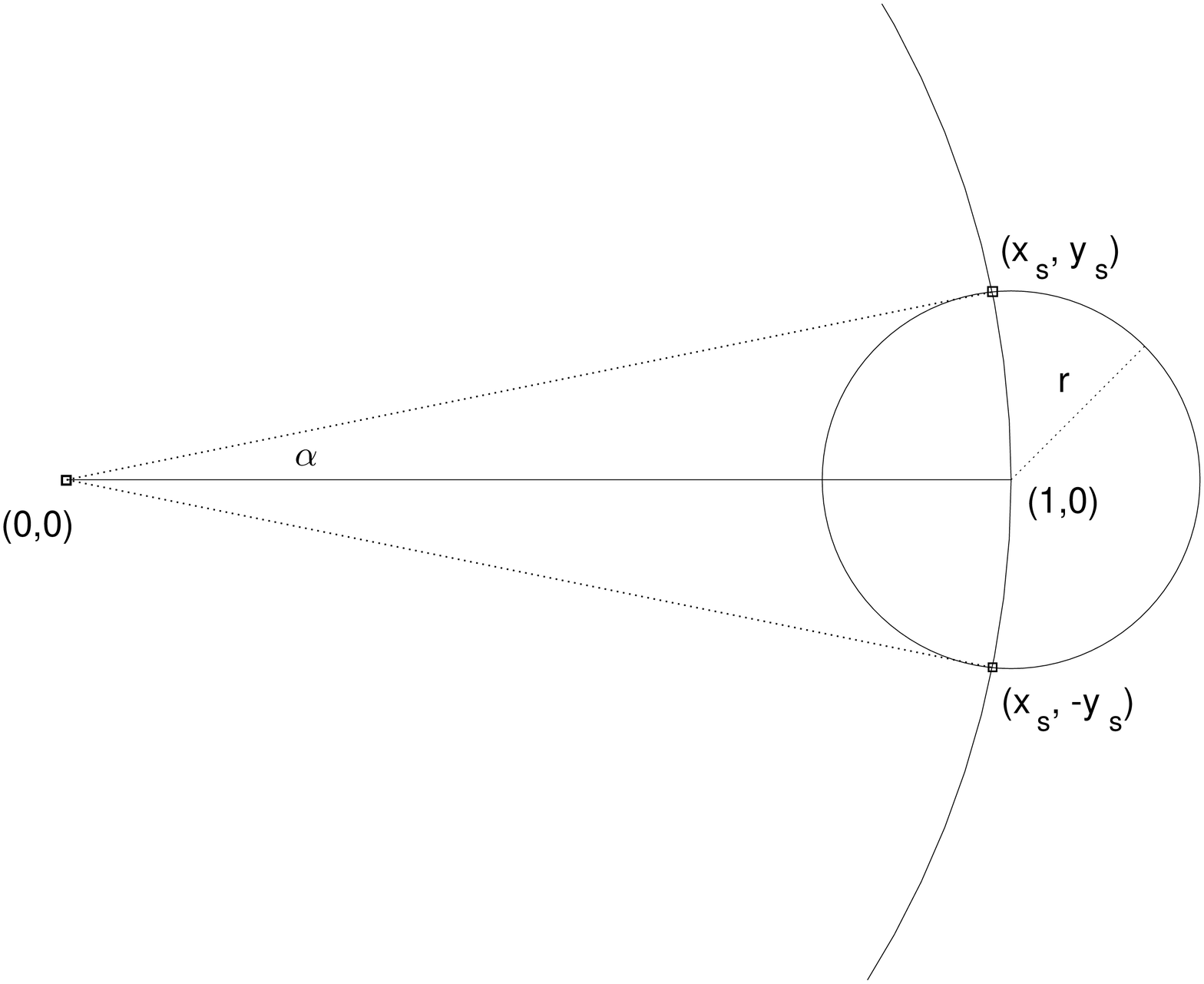}
  \caption{Geometrical illustration of Proposition \ref{prop:logm_bound} in the complex plane.}
    \label{fig:Plot_logm_norm_estimate}
\end{figure}
%
%
\begin{lemma}
 \label{lem:norm_startV0}
  Let $U,\tilde{U} \in St(n,p)$ with $\|U-\tilde{U}\|_2 < \epsilon$. 
  Let $M, N, X_0,Y_0$ and $V_0 := \begin{pmatrix}M&X_0\\N&Y_0\end{pmatrix} \in O_{2p\times 2p}$
  be as constructed in the first steps of Alg. \ref{alg:Stlog}.

  Then
  \begin{equation}
   \label{eq:start_cond_V0}
    \|\log_m(V_0)\|_2 < 2\e \frac{\sqrt{1-\e^2}}{1-2\e^2}.
  \end{equation}
\end{lemma}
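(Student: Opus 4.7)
The plan is to reduce the claim to an application of Proposition \ref{prop:logm_bound} by bounding $\|V_0 - I_{2p}\|_2$ by $2\e$ and then substituting $r = 2\e$ into the estimate provided by that proposition. The identity $r\sqrt{1-r^2/4}/(1-r^2/2) = 2\e\sqrt{1-\e^2}/(1-2\e^2)$ for $r=2\e$ is exactly the right-hand side of \eqref{eq:start_cond_V0}, so everything hinges on establishing the spectral-norm bound on $V_0 - I_{2p}$.

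First I would recall the starting estimates \eqref{eq:start_norms}, which are already in force because step 3 of Alg. \ref{alg:Stlog} prescribes the Procrustes completion: $\|I_p - M\|_2 < \e$, $\|N\|_2 = \|X_0\|_2 < \e$, and $\|Y_0 - I_p\|_2 < \e^2$. Writing
\[
V_0 - I_{2p} = \begin{pmatrix} M - I_p & 0 \\ 0 & Y_0 - I_p \end{pmatrix} + \begin{pmatrix} 0 & X_0 \\ N & 0 \end{pmatrix},
\]
the first (block-diagonal) summand has spectral norm $\max\{\|M-I_p\|_2,\|Y_0-I_p\|_2\} < \max\{\e,\e^2\} = \e$ (since we may assume $\e < 1$). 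For the anti-diagonal summand, a short calculation gives
\[
\begin{pmatrix} 0 & X_0 \\ N & 0 \end{pmatrix}^T \begin{pmatrix} 0 & X_0 \\ N & 0 \end{pmatrix} = \begin{pmatrix} N^TN & 0 \\ 0 & X_0^T X_0 \end{pmatrix},
\]
whose spectral norm is $\max\{\|N\|_2^2,\|X_0\|_2^2\} < \e^2$. Hence the anti-diagonal summand has spectral norm strictly less than $\e$, and the triangle inequality yields $\|V_0 - I_{2p}\|_2 < 2\e$.

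The final step is to invoke Proposition \ref{prop:logm_bound} with $r = 2\e$, which is legitimate under the implicit smallness assumption $\e < 1/2$ (needed to ensure $r<1$, and in any case guaranteed by the ambient hypothesis $\e < 0.0912$ of Theorem \ref{thm:conv_thm}). Substituting $r^2/4 = \e^2$ and $r^2/2 = 2\e^2$ into the conclusion of the proposition produces exactly \eqref{eq:start_cond_V0}. There is no real obstacle here; the only point worth being a little careful about is the block decomposition of $V_0 - I_{2p}$, because a naive entrywise estimate would cost a factor of two and lose the sharp constant in the anti-diagonal block. Splitting the off-diagonal contribution from the block-diagonal correction, rather than bounding each of the four blocks separately, is what keeps the prefactor at $2$ and matches the desired right-hand side cleanly.
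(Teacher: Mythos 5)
Your proposal is correct and follows essentially the same route as the paper: the same splitting of $V_0-I_{2p}$ into a block-diagonal part bounded by $\max\{\e,\e^2\}$ and an anti-diagonal part bounded by $\max\{\|N\|_2,\|X_0\|_2\}<\e$, followed by Proposition \ref{prop:logm_bound} with $r=2\e$. Your explicit $A^TA$ computation for the anti-diagonal block and the remark that $\e<1/2$ is needed so that $r<1$ are just small elaborations of steps the paper leaves implicit.
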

\begin{proof}
Because $V_0$ is orthogonal,
\begin{subequations}
  \begin{align}
   \label{eq:num_rad1}
   1 &= \|V_0\|_2 \geq \nu(V_0) = \max_{\|w\|_2=1}{|w^H\begin{pmatrix}M&X_0\\N&Y_0\end{pmatrix} w|}\\
   \label{eq:num_rad2}
     &\geq \max_{\|v\|_2=1}{|(0, v^H)\begin{pmatrix}M&X_0\\N&Y_0\end{pmatrix} \begin{pmatrix}0\\v\end{pmatrix} |} = \|Y_0\|_2,
  \end{align}
\end{subequations}
where $\nu(V_0)$ denotes the numerical radius of $V_0$, see \cite[eq. 1.21, p. 21]{Greenbaum1997}.
Likewise, $\|M\|_2 \leq 1$ so that the singular values of $M$ and $Y_0$ range between $0$ and $1$.
Moreover, by the Procrustes preprocessing outlined at the end of Section \ref{sec:alg},
\[
 \|N\|_2=\|X_0\|_2 < \e, \quad   \|M-I\|_2 <\epsilon, \quad \|Y_0-I_p\|_2 <\e^2,
\]
see \eqref{eq:start_norms}.
%
%
%
Combining these facts, we obtain $V = I+(V-I) = I + E$, where
\begin{subequations}
  \begin{align}
   \nonumber
   \|E\|_2 &= \|\begin{pmatrix}M-I&X_0\\N&Y_0-I\end{pmatrix}\|_2 \leq \|\begin{pmatrix}M-I& 0 \\ 0 & Y_0-I\end{pmatrix} + \begin{pmatrix} 0 & X_0\\N & 0\end{pmatrix} \|_2\\
   \nonumber
     &\leq \max\{\|M-I\|_2, \|Y_0-I\|_2 \} + \max\{\|N\|_2, \|X_0\|_2\} < \max\{\e, \e^2\} +\e = 2\e.
  \end{align}
\end{subequations}
Applying Proposition \ref{prop:logm_bound} to $V = I + E$ proves the claim.
\end{proof}
%
%
%
\section{A critical special case}
\label{supp:critical_convergence}
We present an example that shows that Alg. \ref{alg:Stlog} may fail at computing
$Log^{St}_{U}(\tilde{U})$ even for $U,\tilde{U}\in St(n,p)$ that are only a Riemannian distance
of $dist(U,\tilde{U}) = \frac{\pi}{2}$ apart.

Consider $n=4, p=2$ and set 
\[   
  U = \frac{1}{2}\begin{pmatrix} 1&1&1&1\\ 1&1&-1&-1   \end{pmatrix}^T\in St(4,2), \quad
  \Delta = \frac{1}{2}\begin{pmatrix} -1&1&-1&1\\ 0&0&0&0   \end{pmatrix}^T\in T_{U}St(4,2).
\]
Note that $\Delta^T U = A=0$ and that $\Delta = QR$ with $R=\begin{pmatrix}1&0\\0&0\end{pmatrix}$
is the qr-decomposition of the tangent vector $\Delta$.
Hence, the Stiefel exponential \eqref{alg:Stlog} applied to this data set yields
\[
 \tilde{U}(t) = Exp^{St}_{U}(t\Delta) = (U,Q)\exp_m\left(\begin{pmatrix}0&-tR\\tR&0\end{pmatrix}\right) \begin{pmatrix}I_2\\0\end{pmatrix}.
\]
Because of the simple structure of $R$, the matrix exponential can be computed explicitly
\[
 \exp_m\left(\begin{pmatrix}0&-tR\\tR&0\end{pmatrix}\right) 
 = \begin{pmatrix}\cos(t) &0 & -\sin(t)&0\\
		  0       &1 & 0       &0\\
		  \sin(t) &0 & \cos(t) &0\\
		  0       &0 & 0       &1\end{pmatrix}.
\]
Recall from Section \ref{sec:Stiefel_essentials} that $dist(U,\tilde{U}(t)) = \sqrt{\langle t\Delta,t\Delta\rangle}_U$,
which in this setting evaluates to $t$, since $\Delta$ is of unit norm also with respect to the canonical metric.
For $t=\frac{\pi}{2}$, we obtain 
\[ 
 \tilde{U}:=  \tilde{U}\left(\frac{\pi}{2}\right) = U\begin{pmatrix}0&0\\0&1\end{pmatrix} + Q\begin{pmatrix}1&0\\0&0\end{pmatrix}
    = \frac{1}{2}\begin{pmatrix} -1&1&-1&1\\ 1&1&-1&-1   \end{pmatrix}^T\in St(4,2).
\]
If we now apply Alg. \ref{alg:Stlog} to the matrix pair $U, \tilde{U}$, then we obtain in step 3 of the algorithm
a corresponding
\[
 V_0 = \begin{pmatrix}0 & 0 &  1 & 0\\
		      0 & 1 &  0 & 0\\
		      1 & 0 &  0 & 0\\
		      0 & 0 &  0 & 1\end{pmatrix},
\]
which features $-1$ as an eigenvalue and thus leads to a failure in the principal matrix logarithm.
The problem here is the ambiguity in the orthogonal completion.
If we replace the first row of the above $V_0$ with its negative,
then we have still a valid orthogonal completion, and the method works.
This example suggests that in a practical implementation of Alg. \ref{alg:Stlog}, one should try and
explore strategies to compute a suitable starting iterate $V_0$ with small $\|\log_m(V_0)\|_2$.
\section{Why is the Grassmann case simpler than the Stiefel case?}
\label{supp:Grassmann_case}
An important matrix manifold that is related with the Stiefel manifold and that arises frequently in applications is
the {\em Grassmann manifold}. It is defined as the set of all $p$-dimensional subspaces $\mathcal{U}\subset \R^n$, i.e.,
\[
  Gr(n, p):= \{\mathcal{U}\subset \R^n| \quad \mathcal{U} \mbox{ subspace, dim}(\mathcal{U}) = p\}.
\]
In this supplementary section, I give sketches for derivations for the Riemannian exponential and logarithm
on the Grassmannian. Closed-form expressions for both mappings are known from the literature and I try to explain why the Stiefel case is more difficult.
For background theory, the reader is referred to \cite{AbsilMahonySepulchre2008}, \cite{EdelmanAriasSmith1999}.

The Grassmann manifold can be realized as a quotient manifold of the Stiefel manifold under actions of the orthogonal group via
\begin{equation}
\label{eq:Grassmann_quotient}
  Gr(n, p) = St(n, p)/O_{p\times p} = \{[U]| \quad U\in St(n, p)\}.
\end{equation}
The quotient view point allows for using points $U\in St(n,p)$ as representatives for
points $[U] \in Gr(n,p)$, i.e., subspaces, see \cite{EdelmanAriasSmith1999} for details.
For any matrix representative $U\in St(n, p)$ of $\mathcal{U}=[U]\in Gr(n, p)$,
the tangent space at $\mathcal{U}$ is represented by
\[
 T_{\mathcal{U}}Gr(n, p) = \left\{\Delta \in \R^{n\times p}|\quad U^T\Delta = 0\right\}\subset \R^{n\times p}.
\]
This representation also stems from considering $Gr(n,p)$ as a quotient manifold with $St(n,p)$ as the total space. 
In fact, the tangent space of the Stiefel manifold can be decomposed into the so-called vertical space
and the horizontal space with respect to the quotient mapping, $T_{U}St(n, p) = \mathcal{V}_U \oplus \mathcal{H}_U$,
see \cite[Problem 3.8]{Lee1997riemannian},
\cite[\S3.5.8]{AbsilMahonySepulchre2008}, \cite[\S 2.3.2]{EdelmanAriasSmith1999}.
The explicit representation of vectors in $T_{[U]}Gr(n, p)$ that we have introduced
above corresponds to the identification of the actual abstract
tangent space $T_{[U]}Gr(n, p)$ with the horizontal space $\mathcal{H}_U$.

From the quotient perspective, Grassmann tangent vectors are special Stiefel tangent vectors $\Delta = U_0A+(I-UU^T)T$,
namely those associated with the special skew-symmetric matrix $A=0\in\R^{p\times p}$, cf. \eqref{eq:tang2}.
Hence, we may use the Stiefel exponential to compute the Grassmann exponential:
\begin{itemize}
 \item Given $\Delta\in T_{[U]}Gr(n, p)$, compute the qr-decomposition $(I-U_0U_0^T)\Delta = \Delta = Q_ER_E.$
 \item Compute the matrix exponential
   \begin{equation}
    \label{eq:baby_log_grass}
    \begin{pmatrix}M\\ N_E\end{pmatrix}
          := \exp_m\left(\begin{pmatrix}0 & -R_E^T\\ R_E & 0\end{pmatrix}\right)\begin{pmatrix}I_p\\ 0\end{pmatrix}.
    \end{equation}
  \item Return $\tilde{U} = Exp^{Gr}_{[U_0]}(\Delta) = [U_0M + Q_EN_E]$.
\end{itemize}
It is precisely the extra upper-left zero-block in the matrix exponential in \eqref{eq:baby_log_grass}, that
makes the Grassmann case easier to tackle than the Stiefel case:
By using the SVD $R_E = \Phi \Sigma D^T$ and the series expansion of $\exp_m$, it is straight-forward to show that
   \begin{equation}
    \label{eq:matrix_exp_Grass}
      \exp_m\left(\begin{pmatrix}0 & -R_E^T\\ R_E & 0\end{pmatrix}\right) 
      = \begin{pmatrix}D & 0\\ 0 & \Phi\end{pmatrix}
	\begin{pmatrix}\cos(\Sigma) & -\sin(\Sigma)\\  \sin(\Sigma) & \cos(\Sigma)\end{pmatrix}
	\begin{pmatrix}D^T & 0\\ 0 & \Phi^T\end{pmatrix},
    \end{equation}
which gives
   \begin{equation}
    \label{eq:grass_exp}
    \begin{pmatrix}M\\ N_E\end{pmatrix}
          =  \begin{pmatrix}D\cos(\Sigma)D^T\\\Phi \sin(\Sigma)D^T \end{pmatrix}.
    \end{equation}
(In the above formulae, it is understood that $\sin$ and $\cos$ are to be applied pointwise to the diagonal elements of the diagonal matrix $\Sigma$.)
Eventually, we arrive at
\[
 Exp^{Gr}_{\mathcal{U}_0}(\Delta) = [U_0M+ QN_E] = [U_0D\cos(\Sigma)D^T + Q \Phi\sin(\Sigma)D^T].
\]
Instead of starting with the qr-decomposition $\Delta = Q_ER_E$, we now see that we could have directly worked with the SVD
$\Delta = \hat{Q}\Sigma D^T (= (Q \Phi) \Sigma D^T)$, which yields
$Exp^{Gr}_{\mathcal{U}_0}(\Delta) = [U_0D\cos(\Sigma)D^T + \hat{Q}\sin(\Sigma)D^T].$

This is exactly the expression that Edelman et al. have found in \cite[Thm. 2.3]{EdelmanAriasSmith1999}
for the Riemannian exponential on $Gr(n,p)$ and the derivation above can be considered as a 'thin SVD'-version of \cite[Thm. 2.3, Proof 2, p. 320]{EdelmanAriasSmith1999}.
%
%

The inverse of this mapping, i.e., the Riemannian logarithm on $Gr(n,p)$ can be deduced as follows:
Consider $[U_0],[\tilde{U}]\in Gr(n,p)$. Under the assumption that $[\tilde{U}]$ is sufficiently close to $[U]$,
it holds that $\tilde{U} = U_0M + QN$ and the task is to find $M,N\in\R^{p\times p}$ and $Q\in St(n,p)$ such that 
$Q^TU_0 = 0$.
The first matrix factor $M$ is uniquely determined by $U_0^T\tilde{U} = M$.
We obtain candidates for $Q,N$ by computing the qr-decomposition $(I-U_0U_0^T)\tilde{U} = QN$.
Yet, in order to reverse \eqref{eq:grass_exp}, it is require to work with consistent coordinates.
Taking \eqref{eq:matrix_exp_Grass}, \eqref{eq:baby_log_grass} into account, this is established
by setting $N_L = NM^{-1}$ and computing the SVD $N_L = \Phi S D^T$, because
by defining $\Sigma = \arctan(S)$, we can decompose
\[
 N_L = \Phi S D^T = \Phi \tan(\Sigma) D^T =  \Phi \sin(\Sigma) D^T D (\cos(\Sigma))^{-1} D^T.
\]
This shows that the choice $R_L:= \Phi \Sigma D^T$ yields a tangent vector $\Delta := QR_L$
such that 
\begin{eqnarray}
\nonumber
    Exp^{Gr}_{\mathcal{U}_0}(\Delta) &=& 
    \left[ (U_0, Q)\exp_m\left(\begin{pmatrix}0 & -R_L^T\\ R_L & 0\end{pmatrix}\right)\begin{pmatrix}I_p\\ 0\end{pmatrix}\right]\\
\nonumber
    &=& [U_0 D\cos(\Sigma) D^T + Q\Phi \sin(\Sigma) D^T]  = [\tilde{U}].  
\end{eqnarray}
Note that $QN_L = Q\Phi S D^T \stackrel{\mbox{SVD}}{=} QNM^{-1} = (I-U_0U_0^T)\tilde{U} M^{-1}$.
Hence, we now see that we could have directly started with the SVD of $\hat{Q} S D^T = (I-U_0U_0^T)\tilde{U} M^{-1}$
to arrive at 
\[
 Log^{Gr}_{\mathcal{U}_0}(\tilde{\mathcal{U}}) = \Delta = \hat{Q} \arctan(S) D^T.
\]
This is the well-known closed-form of the Grassmann logarithm.
Unfortunatley, I was not able to track down the original derivation. The earliest appearance in the literature
that I found was \cite[Alg. 3]{BegelforWerman2006}. However, this reference only mentions the above formuala
but does not cite a source. 
In summary, the Grassmann case is easier to deal with because of the extra off-diagonal block structure in
the associated matrix exponential \eqref{eq:baby_log_grass}, which leads to a CS-decomposition in \eqref{eq:matrix_exp_Grass}
by a {\em similarity transformation}; compare this to \cite[Thm. 2.6.3, p.78]{GolubVanLoan}.
\section{MATLAB code}
\subsection{Alg. \ref{alg:Stlog}}
\label{app:code}
\begin{verbatim}
%
function [Delta, k, conv_hist, norm_logV0] = ...
                                 Stiefel_Log_supp(U0, U1, tau)
%-------------------------------------------------------------
%@author: Ralf Zimmermann, IMADA, SDU Odense
%
% Input arguments      
%  U0, U1 : points on St(n,p)
%     tau : convergence threshold
% Output arguments
%   Delta : Log^{St}_U0(U1), 
%           i.e. tangent vector such that Exp^St_U0(Delta) = U1
%       k : iteration count upon convergence
% supplementary output
%  conv_hist : convergence history
% norm_logV0 : norm of matrix log of first iterate V0
%-------------------------------------------------------------
% get dimensions
[n,p] = size(U0);
% store convergence history
conv_hist = [0];

% step 1
M = U0'*U1;
% step 2
[Q,N] = qr(U1 - U0*M,0);   % thin qr of normal component of U1
% step 3
[V, ~] = qr([M;N]);                   % orthogonal completion

% "Procrustes preprocessing"
[D,S,R]      = svd(V(p+1:2*p,p+1:2*p));
V(:,p+1:2*p) = V(:,p+1:2*p)*(R*D');
V            = [[M;N], V(:,p+1:2*p)];  %          |M  X0|
                                       % now, V = |N  Y0| 
% just for the record
norm_logV0 = norm(logm(V),2);
                                                                           
% step 4: FOR-Loop
for k = 1:10000
    % step 5
    [LV, exitflag] = logm(V);
                                  % standard matrix logarithm
                                  %             |Ak  -Bk'|
                                  % now, LV =   |Bk   Ck |
    C = LV(p+1:2*p, p+1:2*p);     % lower (pxp)-diagonal block
    % steps 6 - 8: convergence check
    normC = norm(C, 2);
    conv_hist(k) = normC;
    if normC<tau;
        disp(['Stiefel log converged after ', num2str(k),...
              ' iterations.']);
        break;
    end
    % step 9
    Phi = expm(-C);              % standard matrix exponential
    % step 10
    V(:,p+1:2*p) = V(:,p+1:2*p)*Phi;   % update last p columns
end
% prepare output                         |A  -B'|
% upon convergence, we have  logm(V) =   |B   0 | = LV
%     A = LV(1:p,1:p);     B = LV(p+1:2*p, 1:p)
% Delta = U0*A+Q*B
Delta = U0*LV(1:p,1:p) + Q*LV(p+1:2*p, 1:p);
return;
end
\end{verbatim}
%
\textbf{Note:} The performance of this method may be enhanced
by computing $\exp_m$, $\log_m$ via a Schur decomposition.
\section{MATLAB code corresponding to Section 5.2}
\label{supp:ex52}
\paragraph{First experiment discribed in Section 5.2}
\begin{verbatim}
%-------------------------------------------------------------
% script_Stiefel_Log_supp52.m
% %@author: Ralf Zimmermann, IMADA, SDU Odense
%-------------------------------------------------------------
clear;
% set dimensions
n = 10;
p = 2;
% fix stream of random numbers for reproducability
s = RandStream('mt19937ar','Seed',1);
% set number of random experiments
runs = 100;
dist = 0.4*pi;
average_iters = 0;
for j=1:runs
    %create random stiefel data
    [U0, U1, Delta] = create_random_Stiefel_data(s, n, p, dist);

    % 'project' Delta onto St(n,p) via the Stiefel exponential
    U1 = Stiefel_Exp_supp(U0, Delta);
    % compute the Stiefel logarithm
    [Delta_rec, k] = Stiefel_Log_supp(U0, U1, 1.0e-13);
                      % uncomment the following lines to check 
                         % if Stiefel logarithm recovers Delta
    %norm(Delta_rec - Delta)
    average_iters = average_iters +k;
end
average_iters = average_iters/runs;
disp(['The average iteration count of the Stiefel log is ',...
      num2str(average_iters)]);    
      
% EOF: script_Stiefel_Log_supp52.m
%-------------------------------------------------------------
 \end{verbatim}
%
%
\paragraph{Second experiment discribed in Section 5.2}
\begin{verbatim}
%-------------------------------------------------------------
% script_Stiefel_Log_supp52b.m
% %@author: Ralf Zimmermann, IMADA, SDU Odense
%-------------------------------------------------------------
clear; close all;

dist = 0.44*pi;
%-------------------------------------------------------------
% set dimensions
n = 10;
p = 2;
% fix stream of random numbers for reproducability
s = RandStream('mt19937ar','Seed',1);

%create random stiefel matrix:
[U0, U1, Delta] = create_random_Stiefel_data(s, n, p, dist);
norm_U0_U1 = norm(U0 - U1,2)
% compute the Stiefel logarithm
tic;
[Delta_rec, k, conv_hist1, norm_logV01] = ...
                           Stiefel_Log_supp(U0, U1, 1.0e-13);
toc;
norm_recon11 = norm(Delta_rec - Delta)
%-------------------------------------------------------------

%-------------------------------------------------------------
% reset dimensions
n = 1000;
p = 200;
%create random stiefel matrix:
[U0, U1, Delta] = create_random_Stiefel_data(s, n, p, dist);
norm_U0_U1 = norm(U0 - U1,2)
% compute the Stiefel logarithm
tic;
[Delta_rec, k, conv_hist2, norm_logV02] = ...
                           Stiefel_Log_supp(U0, U1, 1.0e-13);
toc;
norm_recon12 = norm(Delta_rec - Delta)
%-------------------------------------------------------------

%-------------------------------------------------------------
% reset dimensions
n = 1000;
p = 900;
%create random stiefel matrix:
[U0, U1, Delta] = create_random_Stiefel_data(s, n, p, dist);
norm_U0_U1 = norm(U0 - U1,2)
% compute the Stiefel logarithm
tic;
[Delta_rec, k, conv_hist3, norm_logV03] = ...
                           Stiefel_Log_supp(U0, U1, 1.0e-13);
toc;
norm_recon13 = norm(Delta_rec - Delta)
%-------------------------------------------------------------

%-------------------------------------------------------------
% reset dimensions
n = 100000;
p = 500;
%create random stiefel matrix:
[U0, U1, Delta] = create_random_Stiefel_data(s, n, p, dist);
norm_U0_U1 = norm(U0 - U1,2)
% compute the Stiefel logarithm
tic;
[Delta_rec, k, conv_hist4, norm_logV04] = ...
                           Stiefel_Log_supp(U0, U1, 1.0e-13);
toc;
norm_recon14 = norm(Delta_rec - Delta)
%-------------------------------------------------------------


% plot convergence history
figure;
subplot(1,2,1);
semilogy(1:length(conv_hist1), conv_hist1, 'k-s', ...
         1:length(conv_hist2), conv_hist2, 'k:*', ...
         1:length(conv_hist3), conv_hist3, 'k-.o', ...
         1:length(conv_hist4), conv_hist4, 'k--x');
legend('St(10,2)', 'St(1000,200)', 'St(1000,900)',...
       'St(100000,500)')
% EOF: script_Stiefel_Log_supp52b.m
%-------------------------------------------------------------
 \end{verbatim}
\section{MATLAB code corresponding to Section 5.3}
\label{supp:ex53}
\begin{verbatim}
%-------------------------------------------------------------
% script_Stiefel_Log_supp53.m
% @author: Ralf Zimmermann, IMADA, SDU Odense
%-------------------------------------------------------------
clear; close all;

% set dimensions
n = 100;
p = 10;
% fix stream of random numbers for reproducability
s = RandStream('mt19937ar','Seed',1);

%create random stiefel data
[U0, U1, Delta] = create_random_Stiefel_data(s, n, p, 1.0);

% discretize the interval [0.1, 0.9pi] with resolution res
res = 100;
start = 0.01;
t = linspace(start, 0.9*pi, res)'; 
%*************************
% initialize observations
%*************************
% spectral distance U, Uk
norm_U_Uk  = zeros(res,1);
% iterations until convergence
iters_convk = zeros(res,1);
% norm log(V0)
norm_logV0k = zeros(res,1);
% accuracy of the reconstruction
norm_Delta_Delta_rec_k = zeros(res,1);

for k = 1:res
    % 'project' tDelta onto St(n,p) via the Stiefel exponential
    Uk = Stiefel_Exp_supp(U0, t(k)*Delta);
    % compute spectral norm
    norm_U_Uk(k) = norm(U0-Uk,2);

    % execute the Stiefel logarithm
    disp(['Compute log for t=', num2str(t(k))]);
    [Delta_rec, iters_conv, conv_hist, norm_logV0] = ...
        Stiefel_Log_supp(U0, Uk, 1.0e-13);
    
    % store data
    iters_convk(k) = iters_conv;
    norm_logV0k(k) = norm_logV0;
    norm_Delta_Delta_rec_k(k) = norm(t(k)*Delta-Delta_rec, 2);
end

% visualize results
figure;
subplot(1,3,1);
plot(t, iters_convk, 'k-');
legend('iters until convergence');
hold on 
subplot(1,3,2);
plot(t, norm_U_Uk, 'k-');
legend('norm(U_0-U_k)');
hold on 
subplot(1,3,3);
plot(t, norm_logV0k, 'k-');
legend('norm(log_m(V_0))');

figure;
plot(t, norm_Delta_Delta_rec_k);
legend('reconstruction error');
%EOF: script_Stiefel_Log_supp53.m
%-------------------------------------------------------------
 \end{verbatim}
\section{Auxiliary MATLAB functions}
\label{supp:Stexp}
\paragraph{Stiefel exponential}
\begin{verbatim}
%-------------------------------------------------------------
%file: Stiefel_Exp_supp.m
% @author: Ralf Zimmermann, IMADA, SDU Odense
%-------------------------------------------------------------
function [U1] = Stiefel_Exp_supp(U0, Delta)
%-------------------------------------------------------------
% Input arguments      
%   U0    : base point on St(n,p)
%   Delta : tangent vector in T_U0 St(n,p)
% Output arguments
%   U1    : Exp^{St}_U0(Delta), 
%-------------------------------------------------------------
% get dimensions
[n,p] = size(U0);
A = U0'*Delta;                          % horizontal component
K = Delta-U0*A;                             % normal component
[Qe,Re] = qr(K, 0);                   % qr of normal component
% matrix exponential
MNe = expm([[A, -Re'];[Re, zeros(p)]]);
U1 = [U0, Qe]*MNe(:,1:p);
return;
end
%EOF: Stiefel_Exp_supp.m
%-------------------------------------------------------------
 \end{verbatim}

\paragraph{Construction of random data on the Stiefel manifold}
\begin{verbatim}
%-------------------------------------------------------------
%file: create_random_Stiefel_data.m
% @author: Ralf Zimmermann, IMADA, SDU Odense
%-------------------------------------------------------------
function [U0, U1, Delta] =...
    create_random_Stiefel_data(s, n, p, dist)
%-------------------------------------------------------------
% create a random data set 
% U0, U1 on St(n,p),
%  Delta on T_U St(n,p) with canonical norm 'dist',
% which is also the Riemannian distance dist(U0,U1)
%
% input arguments
%     s = random stream (for reproducability)
% (n,p) = dimension of the Stiefel matrices
% dist  = Riemannian distance between the points U0,U1
%         that are to be created
%-------------------------------------------------------------
%create random stiefel matrix:
X = rand(s, n,p);
[U0,~] = qr(X, 0);
% create random tangent vector in T_U0 St(n,p)
A = rand(s, p,p);
A = A-A';            % random p-by-p skew symmetric matrix
T = rand(s, n,p);
Delta = U0*A + T-U0*(U0'*T); 
%normalize Delta w.r.t. the canonical metric
norm_Delta = sqrt(trace(Delta'*Delta) - 0.5*trace(A'*A));
Delta = (dist/norm_Delta)*Delta;
% 'project' Delta onto St(n,p) via the Stiefel exponential
U1 = Stiefel_Exp_supp(U0, Delta);
return;
end
%EOF: create_random_Stiefel_data.m
%-------------------------------------------------------------
 \end{verbatim}
%
%

\end{document}